\setlist[enumerate]{label = (\alph*), ref=(\text{\alph*)}}
\setlist[itemize]{nolistsep}
\tikzstyle{precornfill} = [fill=black!10, draw = black]
\tikzstyle{cornfill} = [fill=black!30, draw = black]
\renewcommand{\phi}{\varphi}
\renewcommand{\ge}{\geqslant}
\renewcommand{\le}{\leqslant}
\newcommand{\KK}{\mathbb{K}}
\renewcommand{\AA}{\mathbb{A}}
\newcommand{\ZZ}{\mathbb{Z}}
\newcommand{\GG}{\mathbb{G}}
\newcommand{\PP}{\mathbb{P}}
\newcommand{\mm}{\mathfrak{m}}
\newcommand{\NN}{\mathbb{Z}_{>0}}
\newcommand{\Zgezero}{\mathbb{Z}_{\geqslant 0}}
\newcommand{\pa}{\partial}
\newcommand{\Xsing}{X^{\mathrm{sing}}}
\newcommand{\Zsing}{Z^{\mathrm{sing}}}
\newcommand{\lecur}{\preccurlyeq}
\DeclareMathOperator{\GL}{GL}
\DeclareMathOperator{\codim}{codim}
\DeclareMathOperator{\Lie}{Lie}
\DeclareMathOperator{\Mat}{Mat}
\DeclareMathOperator{\Soc}{Soc}
\DeclareMathOperator{\corn}{corn}
\DeclareMathOperator{\precorn}{precorn}
\theoremstyle{plain}
\newtheorem{lemma}{Lemma}
\newtheorem{proposition}{Proposition}
\newtheorem{theorem}{Theorem}
\newtheorem{corollary}{Corollary}
\theoremstyle{definition}
\newtheorem{definition}{Definition}
\newtheorem{example}{Example}
\newtheorem{construction}{Construction}
\theoremstyle{remark}
\newtheorem{remark}{Remark}
\begin{document}

\title[On normality of projective hypersurfaces with an additive action]{On normality of projective hypersurfaces \\ with an additive action}
\author{Ivan Arzhantsev}
\address{HSE University, Faculty of Computer Science, Pokrovsky Boulvard 11, Moscow, 109028 Russia}
\email{arjantsev@hse.ru}

\author{Ivan Beldiev}
\address{HSE University, Faculty of Computer Science, Pokrovsky Boulvard 11, Moscow, 109028 Russia}
\email{ivbeldiev@gmail.com}

\author{Yulia Zaitseva}
\address{HSE University, Faculty of Computer Science, Pokrovsky Boulvard 11, Moscow, 109028 Russia}
\email{yuliazaitseva@gmail.com}

\thanks{Supported by the Russian Science Foundation grant 23-21-00472}

\subjclass[2010]{Primary 14L30, 13E10; \ Secondary 14J70, 14J17}

\keywords{Projective space, hypersurface, projective cone, commutative unipotent group, local algebra, singular locus}

\begin{abstract}
We study projective hypersurfaces $X$ admitting an induced additive action, i.e., an effective action ${\GG_a^m\times X\to X}$ of the vector group $\GG_a^m$ with an open orbit that can be extended to an action on the ambient projective space. A criterion for normality of such a hypersurface $X$ is given. Also, we prove that for any projective hypersurface $Z$ there exists a hypersurface $X$ with an induced additive action such that the complement to the open $\GG_a^m$-orbit in $X$ is a projective cone over $Z$. We introduce a construction that produces non-degenerate hypersurfaces with induced additive action from Young diagrams and study the properties of the hypersurfaces obtained in this way.
\end{abstract}

\maketitle

\section{Introduction}

We assume that the base field $\KK$ is an algebraically closed field of characteristic zero. All algebraic varieties we consider are algebraic varieties over $\KK$. For a positive integer $m$, we denote by~$\GG_a^m$ the algebraic group $(\KK^m, +)$.

An \emph{additive action} on an algebraic variety $X$ is an effective regular action of the group~$\GG_a^m$ on $X$ with an open orbit. We are mainly interested in the case when $X\subseteq \PP^n$ is a projective hypersurface and an additive action $\GG_a^m\times X\to X$ can be extended to a regular action of the group $\GG_a^m$ on the ambient projective space $\PP^n$. Such actions on $X$ are called \emph{induced additive actions}. Note that $m=n-1$ for dimension reasons.

Although not all additive actions on projective hypersurfaces are induced (see \cite[Example~2.2]{AZa}), the property of being induced is not very restrictive. For example, let $X\subseteq \PP^n$ be a linearly normal subvariety. This means that $X$ is not contained in a hyperplane and cannot be obtained as a linear projection of a subvariety from a bigger projective space, or, equivalently,  the restriction map $H^0(\PP^n, \mathcal O(1))\to H^0(X, \mathcal O(1))$ is surjective. Then any additive action on $X$ is induced; see \cite[Section 2]{AP}.

In \cite{HT}, Hassett and Tschinkel establish a natural bijection between additive actions on~$\PP^n$ and local finite-dimensional commutative associative unital algebras of dimension~$n+1$. In \cite[Section 1.5]{AZa}, the authors prove a generalized version of the Hassett-Tschinkel correspondence. It says that there is, up to natural equivalences, a bijection between the following objects:

\begin{enumerate}
    \item induced additive actions on projective hypersurfaces in $\PP^n$ different from a hyperplane;
    \item pairs $(A, U)$, where $A$ is a local commutative associative unital algebra over $\KK$ of dimension $n+1$ with the maximal ideal $\mm$ and $U\subseteq \mm$ is a hyperplane generating the algebra $A$. Such pairs $(A,U)$ are called \emph{$H$-pairs}.
\end{enumerate}

It is natural to study geometric properties of projective hypersurfaces admitting an induced additive action. For example, one may ask when such a hypersurface is smooth. It turns out that the only smooth hypersurfaces admitting an induced additive action are hyperplanes and non-degenerate quadrics; see \cite[Proposition~4]{AP}.

The question of normality of a hypersurface with an induced additive action is more delicate; see, for instance, examples in \cite[Section~4]{Bel}. In Section~3, we give a criterion for normality of such a hypersurface in terms of the first two homogeneous components $f_d$ and $f_{d-1}$ of the polynomial $f$ defining the hypersurface~$X$ of degree~$d$; see Proposition~\ref{propnorm}. In particular, it implies that $X$ is normal, provided the polynomial $f_d$ is square-free; see Corollary~\ref{cornorm}.

Let us denote by $X_0$ the complement in $X$ to the open $\GG_a^m$-orbit and call it the boundary of $X$. It is a hypersurface in a hyperplane of the ambient projective space $\PP^n$. In Section~4, we prove that the boundary $X_0$ is either a projective subspace or a projective cone over a hypersurface $Z_0$ of smaller dimension, see Proposition~\ref{boudcone_prop}. In the second case, the variety $X_0$ is always singular. Moreover, in Theorem~\ref{thb} we prove that for any hypersurface $Z_0$, there is a hypersurface $X$ with an induced additive action whose boundary $X_0$ is a projective cone over $Z_0$. To show this, we give an explicit construction of a suitable $H$-pair $(A, U)$. It implies that in certain dimensions, there are infinitely many pairwise non-isomorphic normal hypersurfaces admitting an induced additive action.

In Section~5, we introduce the class of projective hypersurfaces coming from $H$-pairs $(A_{\Lambda,B}, U_{\Lambda,B})$ associated with Young diagrams~$\Lambda$. We give a criterion for an algebra~$A_{\Lambda,B}$ to be Gorenstein via a system of linear equations. It is a system on the variables corresponding to precorner cells of the Young diagram~$\Lambda$, and we prove that the number of precorner cells is at least the dimension of~$\Lambda$ in non-exceptional cases; see Proposition~\ref{precorn_prop}. If $A_{\Lambda,B}$ is not Gorenstein, one can apply the reduction procedure and obtain the $H$-pair $(\widetilde{A_{\Lambda,B}}, \widetilde{U_{\Lambda,B}})$ with Gorenstein algebra, which corresponds to a non-degenerate projective hypersurface. The considered family of hypersurfaces contains the hypersurfaces used in the proof of the main result of Section 4 and all the Gorenstein algebras of dimension at most~6. We illustrate the result obtained in Section~3 and give precise formulas for the polynomials $f_d$ and~$f_{d-1}$, which determine the normality of the hypersurface; see Proposition~\ref{propYD}. It implies the combinatorial sufficient condition for normality of such hypersurfaces; see Proposition~\ref{prYD}.

It is proved in~\cite[Corollary~5.2]{AS} that a hypersurface $X\subseteq\PP^n$ admitting an induced additive action has degree at most $n$. In Proposition~\ref{pp}, we show that such a hypersurface of degree $n$ is unique. We find an explicit form of the equation of this hypersurface and prove that it is normal if and only if $n\le 2$.

\section{Preliminaries}

We need some basic definitions and results on local finite-dimensional algebras. Recall that an algebra is called \emph{local} if it has a unique maximal ideal $\mm$. All algebras are assumed to be commutative, associative, and unital.

\begin{lemma} \cite[Lemma 1.2]{AZa}
    A finite-dimensional algebra $A$ is local if and only if $A$ is the direct sum of its subspaces $\KK \oplus \mm$, where $\mm$ is the ideal consisting of all nilpotent elements of $A$.
\end{lemma}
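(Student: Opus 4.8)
The plan is to establish the two implications separately by elementary ring theory. For the forward direction, suppose $A$ is local with maximal ideal $\mm$. First I would show that $\mm$ is a \emph{nilpotent} ideal: since $\dim_{\KK}A<\infty$, the descending chain $\mm\supseteq\mm^2\supseteq\cdots$ stabilizes at some step $\mm^k=\mm^{k+1}=\mm\cdot\mm^k$, and Nakayama's lemma --- applicable because $\mm$, being the unique maximal ideal, is the Jacobson radical of $A$ --- yields $\mm^k=0$. Hence every element of $\mm$ is nilpotent. Conversely, a nilpotent element is never invertible, so it belongs to some maximal ideal of $A$, which can only be $\mm$; thus $\mm$ is exactly the set of nilpotent elements of $A$. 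Finally, $A/\mm$ is a field finite-dimensional over the algebraically closed field $\KK$, so the canonical map $\KK\to A/\mm$, being the composition $\KK\hookrightarrow A\twoheadrightarrow A/\mm$, is an isomorphism; this gives $\KK\cdot 1\cap\mm=0$ and $A=\KK\cdot 1+\mm$, i.e. the desired decomposition $A=\KK\oplus\mm$ of vector spaces.

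For the converse, assume $A=\KK\oplus\mm$, where $\mm$ denotes the set of all nilpotent elements of $A$ --- automatically an ideal, since $A$ is commutative. As $1$ is not nilpotent, $\mm$ is a proper ideal and $A/\mm\cong\KK$ is a field, so $\mm$ is maximal; it remains to prove it is the only maximal ideal. Let $I\subsetneq A$ be a maximal ideal and pick $a\in I$. Write $a=\lambda\cdot 1+n$ with $\lambda\in\KK$ and $n\in\mm$. If $\lambda\ne 0$, then $a=\lambda\,(1+\lambda^{-1}n)$, and $1+\lambda^{-1}n$ is invertible because $\lambda^{-1}n$ is nilpotent and the finite geometric sum $\sum_{j\ge 0}(-\lambda^{-1}n)^j$ provides an inverse; hence $a$ is a unit, contradicting $a\in I$. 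Therefore $\lambda=0$ and $a\in\mm$, so $I\subseteq\mm$, and maximality of $I$ forces $I=\mm$. Thus $A$ is local.

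The argument is essentially formal, and I do not expect any serious obstacle. The single step that uses more than bookkeeping is the nilpotency of $\mm$ in the forward direction; the clean way to obtain it is Nakayama's lemma (equivalently, the fact that the Jacobson radical of an Artinian ring is nilpotent). Apart from that, everything rests on the two elementary remarks that $1+(\text{nilpotent})$ is a unit and that a unit lies in no proper ideal.
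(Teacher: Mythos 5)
Your proof is correct: the forward direction (Nakayama applied to the stabilizing chain $\mm\supseteq\mm^2\supseteq\cdots$ to get nilpotency of $\mm$, identification of $\mm$ with the nilradical, and $A/\mm\cong\KK$ via algebraic closedness) and the converse (the nilradical is an ideal with $A/\mm\cong\KK$, and $1+{}$nilpotent is a unit, forcing every maximal ideal into $\mm$) are exactly the standard arguments. The paper itself offers no proof, quoting the statement from \cite[Lemma~1.2]{AZa}, so there is no in-paper argument to compare with; your write-up is the expected one and is complete.
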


\begin{definition}
    The \emph{socle} of a local algebra $A$ with the maximal ideal $\mm$ is the ideal $$\Soc A = \{a\in A\mid a \mm = 0\}.$$ A local finite-dimensional algebra $A$ is called \emph{Gorenstein} if $\dim \Soc A = 1$.
\end{definition}

If $d$ is the maximal number such that $\mm^d\ne 0$, then $\mm^d\subseteq \Soc A$. However, this inclusion can be strict. So, $A$ is Gorenstein if and only if $\dim \mm^d = 1$ and $\Soc A = \mm^d$.

Next, we give a formal definition of equivalence of induced additive actions on projective hypersurfaces.

\begin{definition}
    Two induced additive actions $\alpha_i\colon \GG_a^m \times X_i\to X_i$, $X_i\subseteq \PP^n$, $i = 1, 2$, are called \emph{equivalent} if there exists an automorphism of an algebraic group $\phi\colon \GG_a^m \to \GG_a^m$ and an automorphism $\psi \colon \PP^n \to \PP^n$ such that $\psi(X_1) = X_2$ and $\psi\circ\alpha_1=\alpha_2\circ(\phi\times\psi).$
\end{definition}

Let us give a definition of an $H$-pair.

\begin{definition}
\label{pdpd}
An \emph{$H$-pair} is a pair $(A,U)$, where $A$ is a local finite-dimensional algebra with the maximal ideal $\mm$ and $U\subseteq \mm$ is a hyperplane generating $A$ as a unital algebra.
\end{definition}

One can define equivalence of $H$-pairs as follows.

\begin{definition}
    Two pairs $(A_1, U_1)$ and $(A_2, U_2)$ are called \emph{equivalent} if there exists an isomorphism of algebras $\phi\colon A_1 \to A_2$ such that $\phi(U_1) = U_2$.
\end{definition}

Now, we give the precise statement of the generalized version of the Hassett-Tschinkel correspondence.

\begin{theorem}\cite[Theorem 2.6]{AZa}
\label{prth}
    Suppose that $n\in \NN$. There is a one-to-one correspondence between the following objects:
    \begin{enumerate}
        \item induced additive actions on hypersurfaces in $\PP^n$ not contained in any hyperplane;
        \item pairs $(A,U)$, where $A$ is a local commutative associative unital algebra of dimension~$n+1$ with the maximal ideal $\mm$ and $U\subseteq \mm$ is a hyperplane generating the algebra~$A$.
    \end{enumerate}
    This correspondence is considered up to equivalences from Definitions 2 and 4.
\end{theorem}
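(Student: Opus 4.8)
The plan is to realize both sides of the correspondence concretely through the dictionary ``local finite-dimensional algebra $\leftrightarrow$ commuting family of nilpotent operators with a cyclic vector'', build maps in both directions, and match up the two notions of equivalence.

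\emph{From an $H$-pair to a hypersurface.} Given $(A,U)$ with $\dim A=n+1$ and maximal ideal $\mm$, I would let $A$ act on $V:=A$ by multiplication. Every element of $\mm$ acts nilpotently, so for a basis $u_1,\dots,u_{n-1}$ of $U$ the operators $N_i$ of multiplication by $u_i$ are commuting nilpotent operators, linearly independent since $N_i(1)=u_i$. Hence $t\mapsto\exp(\sum_i t_iN_i)$ is a faithful representation $\GG_a^{n-1}\to\GL(V)$, inducing an action on $\PP(V)=\PP^n$, and I would take $X$ to be the closure of the orbit of $[1]$. The orbit map $u\mapsto[\exp(u)]$, $u\in U$, is injective (compare the coefficients at $1$ to fix the scalar, then use injectivity of $\exp$ on nilpotents) and intertwines the $\GG_a^{n-1}$-action with translations on $U\cong\AA^{n-1}$; so the orbit is $(n-1)$-dimensional, $X$ is a hypersurface, and the action on it is induced. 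Since $U$ generates $A$, the monomials in $u_1,\dots,u_{n-1}$ span $A$ and occur, up to scalars, among the Taylor coefficients of $\exp(\sum_i t_iu_i)\in A$; hence the orbit, and so $X$, is not contained in any hyperplane.

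\emph{From a hypersurface to an $H$-pair.} Conversely, let $X\subseteq\PP^n$ be a hypersurface not contained in a hyperplane carrying an induced additive action of $G:=\GG_a^{n-1}$ on $\PP^n$. As $G$ is unipotent and the action is effective, the projective representation lifts to a faithful linear representation $G\to\GL(V)$ with $V:=\KK^{n+1}$; differentiating gives $n-1$ linearly independent commuting nilpotent operators $N_1,\dots,N_{n-1}$ on $V$. Pick a point of the open orbit and a lift $v\in V$. Since the orbit of $[v]$ spans $\PP(V)$, $v$ is a cyclic vector: $\KK[N_1,\dots,N_{n-1}]v=V$. Set $A:=\KK[N_1,\dots,N_{n-1}]\subseteq\mathrm{End}(V)$ and consider $\mathrm{ev}_v\colon A\to V$, $\phi\mapsto\phi(v)$; it is surjective by cyclicity and injective because $\phi(v)=0$ forces $\phi$ to kill $N_{i_1}\cdots N_{i_k}v$ for all multi-indices, hence all of $Av=V$. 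Thus $\dim A=n+1$, $A$ is local with maximal ideal $\mm$ generated by the $N_i$, and $U:=\langle N_1,\dots,N_{n-1}\rangle\subseteq\mm$ is a hyperplane in $\mm$ generating $A$, so $(A,U)$ is an $H$-pair.

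\emph{Mutual inversion and equivalences.} Starting from $(A,U)$, running the second construction on the resulting $X$ uses the $G$-module $V=A$ with cyclic vector $1$, recovering the operator algebra $A$ and the subspace $U$ unchanged. For the opposite composite I would verify that the auxiliary choices in the second construction --- the point of the open orbit, the linear lift $V$ of $\PP^n$, and an automorphism of $\GG_a^{n-1}$ --- change $(A,U)$ only within its equivalence class (in fact $U\subseteq A$ is unchanged and $A$ changes by an algebra isomorphism), so that the construction descends to equivalence classes; conversely an equivalence of additive actions lifts to an isomorphism of $G$-modules carrying one $H$-pair onto the other, and an isomorphism of $H$-pairs manifestly yields an equivalence of the associated additive actions. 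Combined with the well-definedness checks of the first construction, this gives the bijection.

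\emph{Main obstacle.} The conceptual content is light once the dictionary is set up; the work is bookkeeping. The two delicate points are the lifting lemma for projective representations of the unipotent group $\GG_a^{n-1}$, and --- the main one --- checking that none of the auxiliary choices in passing from $X$ to $(A,U)$ affect the $H$-pair beyond the equivalence of Definition~4, which is exactly what makes the correspondence well defined in both directions.
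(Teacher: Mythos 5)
Your construction matches the one the paper (following \cite[Theorem~2.6]{AZa}) uses for this correspondence: from an $H$-pair you take the closure of the $\exp U$-orbit of $[1]$ in $\PP(A)$, and conversely you lift the induced action to a linear representation, take $U=d\rho(\mathfrak g_a^{n-1})$ and the unital algebra it generates, with the same well-definedness and equivalence bookkeeping. So the proposal is correct and takes essentially the same route as the paper's cited proof.
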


The construction of this correspondence is done as follows. For an $H$-pair $(A, U)$, denote by $p\colon A \setminus \{0\} \to \PP(A)\cong \PP^n$ the canonical projection. Then we define
$$X = p(\overline{\KK^{\times}\exp U}),$$
i.e., it is the projectivization of the Zariski closure of the subset $\KK^{\times}\exp U\subseteq A\setminus\{0\}$. Since $A$ is commutative, the algebraic group $\exp U$ can be identified with $\GG_a^{n-1}$; hence the multiplication by elements of $\exp U$ defines an action of $\GG_a^{n-1}$ on $\PP(A)$. It is easy to see that $X$ is preserved under this multiplication, so this defines an induced additive action of $\GG_a^{n-1}$ on $X \subseteq \PP(A) \cong \PP^n$.

Conversely, an induced additive action of $\GG_a^{n-1}$ on a hypersurface $X\subseteq \PP^n = \PP(V)$, where $\dim V = n+1$, can be lifted to a linear action of $\GG_a^{n-1}$ on $V$, which gives us a faithful representation $\rho \colon \GG_a^{n-1} \to \GL_{n+1}(\KK)$. Let $U$ be the vector space $d\rho(\mathfrak g_a^{n-1})$ and define $A$ as the unital subalgebra of $\Mat_{n+1}(\KK)$ generated by $U$; here
$$
d\rho\colon \mathfrak g_a^{n-1} = \Lie(\GG_a^{n-1})\to \Mat_{n+1}(\KK)
$$
is the differential of the map $\rho$. It can be checked that $(A, U)$ is an $H$-pair. One can find the details in \cite[Theorem~1.38]{AZa}.

Given an $H$-pair $(A, U)$, the equation of the corresponding hypersurface is computed as follows (for more details, see \cite[Chapter 2.2]{AZa}). Denote by $\pi$ the canonical projection $\pi\colon U \to U/\mm$. Let $d$ be the greatest positive integer such that $\mm^d \nsubseteq U$. The corresponding projective hypersurface is given by the homogeneous equation
\begin{equation}
\label{hyp_eq}
z_0^d\pi\left(\ln\Bigl(1 + \frac{z}{z_0}\Bigr)\right) = 0
\end{equation}
for $z_0 + z\in A = \KK \oplus \mm$, $z_0\in\KK$, $z\in \mm$. This hypersurface is irreducible and has degree~$d$.

\begin{definition}
   The hypersurface $X$ given by the equation $f(z_0, z_1, \ldots, z_n) = 0$, where $f$ is a homogeneous polynomial, is called \emph{non-degenerate} if one of the following equivalent conditions holds:
    \begin{enumerate}
    \item there exists no linear transform of variables such that the number of variables in $f$ after this transform becomes less than $n+1$;
    \item the hypersurface $X$ is not a projective cone over a hypersurface $Z\subseteq \PP^k$ in a projective subspace $\PP^k \subseteq \PP^n$ for some $k < n$.
    \end{enumerate}
\end{definition}
The following theorem is proved in \cite[Theorem 2.30]{AZa}.

\begin{theorem}
\label{tgor}
Induced additive actions on non-degenerate hypersurfaces of degree $d$ in $\PP^n$ are in one-to-one correspondence with $H$-pairs $(A, U)$, where $A$ is a Gorenstein local algebra of dimension~$n+1$ with the socle $\mm^d$ and $\mm = U \oplus \mm^d$. 
\end{theorem}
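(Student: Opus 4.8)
My plan is to build the correspondence on top of Theorem~\ref{prth}, which already matches induced additive actions on hypersurfaces in $\PP^n$ not contained in a hyperplane with $H$-pairs $(A,U)$ of dimension $n+1$, up to the equivalences of Definitions~2 and~4. Since these equivalences act by linear changes of coordinates on one side (hence preserve non-degeneracy) and by algebra isomorphisms on the other (hence preserve $\mm$, $\Soc A$, $\mm^d$ and the class of $U$), and since a non-degenerate hypersurface is in particular not a hyperplane and so not contained in any hyperplane, it suffices to prove the following: the hypersurface $X$ attached to an $H$-pair $(A,U)$ by~\eqref{hyp_eq} is non-degenerate if and only if $A$ is Gorenstein; and for an $H$-pair the conditions ``$A$ Gorenstein'', ``$\Soc A=\mm^d$'' and ``$\mm=U\oplus\mm^d$'' are equivalent. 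The last point is bookkeeping: by the choice of $d$ one has $0\ne\mm^d\subseteq\Soc A$ and $\mm^d\nsubseteq U$, so $\dim\Soc A=1$ forces $\Soc A=\mm^d$ and then $\mm=U\oplus\mm^d$ because $U$ is a hyperplane in $\mm$ and $\dim\mm^d=1$; conversely $\mm=U\oplus\mm^d$ gives $\dim\mm^d=1$, which together with $\Soc A=\mm^d$ yields $\dim\Soc A=1$.

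For the main equivalence I would first recall that a hypersurface $X=\{F=0\}\subseteq\PP^n$ is non-degenerate precisely when there is no nonzero $v$ with $\partial_v F\equiv 0$ (such a $v$ is a direction along which $F$ becomes independent after a linear change of coordinates, i.e.\ a vertex direction of the cone). Set $V_X=\{v\in A:\partial_v F\equiv 0\}$; it is a linear subspace, and everything comes down to identifying it. Here I would plug $v=v_0\cdot 1+v'$, $v_0\in\KK$, $v'\in\mm$, into $F=\sum_{k=1}^{d}\frac{(-1)^{k-1}}{k}z_0^{d-k}\pi(z^k)$, where $\pi\colon\mm\to\mm/U$, and differentiate using $\partial_{v'}\pi(z^k)=k\pi(z^{k-1}v')$. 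Grouping the result by powers of $z_0$ and polarizing each coefficient in $z$ (we work in characteristic zero, so $\pi(z^{\ell}w)\equiv 0$ as a polynomial in $z$ is equivalent to $\mm^{\ell}w\subseteq U$), one finds that $v\in V_X$ is equivalent to $v'\in U$ together with $\mm^{\ell}\bigl(v'-\tfrac{(d-\ell)v_0}{\ell}\cdot 1\bigr)\subseteq U$ for $\ell=1,\dots,d-1$.

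The relation for $\ell=1$ is $\mm\cdot\bigl(v'-(d-1)v_0\cdot 1\bigr)\subseteq U$, and one always has $d\ge 2$: since $U$ generates $A$ we get $\mm=U+\mm^2$, so $\mm^2\subseteq U$ would force $U=\mm$. If $v_0\ne 0$ then $v'-(d-1)v_0\cdot 1$ has nonzero scalar part, hence is a unit of $A$, and multiplication by a unit maps $\mm$ onto $\mm$; we would get $\mm\subseteq U$, a contradiction. Therefore $V_X\subseteq\mm$, and for $v'\in\mm$ the relations for $\ell=1,\dots,d-1$ collapse (using $\mm^{j+1}v'\subseteq\mm^j v'$) to $v'\in U$ and $\mm v'\subseteq U$, i.e.\ to $(v')\subseteq U$. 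Hence $V_X$ is the largest ideal of $A$ contained in $U$, and $X$ is non-degenerate if and only if $U$ contains no nonzero ideal of $A$.

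It then remains to check that $U$ contains a nonzero ideal of $A$ exactly when $A$ is not Gorenstein. If $\dim\Soc A\ge 2$, then $\Soc A\cap U\ne 0$ because $\Soc A\subseteq\mm$ and $U$ has codimension one in $\mm$; and any subspace of $\Soc A$ is an ideal of $A$, so this is a nonzero ideal contained in $U$. Conversely, if $I\subseteq U$ is a nonzero ideal, take the least $k\ge 1$ with $\mm^k I=0$; then $\mm^{k-1}I$ is a nonzero subspace of $I\cap\Soc A\subseteq U\cap\Soc A$, and since $\mm^d\subseteq\Soc A$ is nonzero and not contained in $U$, we obtain two linearly independent elements of $\Soc A$, so $A$ is not Gorenstein. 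Combining the last two paragraphs proves the theorem. The main difficulty is the computation of $\partial_v F$ in the second paragraph and reading off the clean condition $(v')\subseteq U$ --- in particular, handling vectors with nonzero scalar component, which the unit argument above eliminates, so that one does not have to untangle the whole system of relations for all $\ell$.
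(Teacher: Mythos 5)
Your overall strategy (restrict the bijection of Theorem~\ref{prth} and characterize, in terms of the $H$-pair, when the hypersurface of equation~\eqref{hyp_eq} is non-degenerate) is reasonable, and your computation identifying $V_X=\{v:\partial_vF\equiv 0\}$ with the largest ideal of $A$ contained in $U$ looks sound and is consistent with Proposition~\ref{prop_reduct}. Note that the paper itself does not prove this theorem but cites it from the survey of Arzhantsev--Zaitseva, so there is no in-paper argument to compare with; judged on its own, however, your proof has a genuine gap in the final reduction. You claim that for any $H$-pair one has $0\ne\mm^d\subseteq\Soc A$, where $d$ is the greatest integer with $\mm^d\nsubseteq U$, and you use this both in the ``bookkeeping'' paragraph (to conclude that Gorenstein alone forces $\Soc A=\mm^d$ and $\mm=U\oplus\mm^d$) and in the last paragraph (to produce an element of $\Soc A$ outside $U$). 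This inclusion requires $\mm^{d+1}=0$, which is \emph{not} automatic: $d$ is defined via $U$, and it may happen that $\mm^{d+1}\ne 0$ but $\mm^{d+1}\subseteq U$. Concretely, take $A=\KK[x]/(x^4)$ and $U=\langle x,x^3\rangle$: this is an $H$-pair, $d=2$ (since $\mm^2\nsubseteq U$ but $\mm^3\subseteq U$), the hypersurface is $z_0z_2-\tfrac12 z_1^2=0$ in $\PP^3$, which does not involve $z_3$ and hence is degenerate, yet $A$ is Gorenstein. Here $U$ contains the nonzero ideal $\mm^3=\Soc A$, so both your claimed equivalence ``$X$ non-degenerate iff $A$ Gorenstein'' and the auxiliary claim $\mm^d\subseteq\Soc A$ fail; this is exactly why the theorem's statement carries the extra hypotheses $\Soc A=\mm^d$ and $\mm=U\oplus\mm^d$ rather than Gorenstein-ness alone.

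The repair is short but changes the endpoint of your argument: show that ``$U$ contains no nonzero ideal of $A$'' is equivalent to the full package of conditions in the theorem, not to Gorenstein-ness. Indeed, any nonzero ideal $I$ meets $\Soc A$ nontrivially (your $\mm^{k-1}I$ argument), so $U$ contains no nonzero ideal iff $\Soc A\cap U=0$; since $U$ has codimension one in $\mm$ and $\Soc A\ne 0$, this holds iff $\dim\Soc A=1$ and $\Soc A\nsubseteq U$. In that case, letting $e$ be maximal with $\mm^e\ne 0$, one has $0\ne\mm^e\subseteq\Soc A$, hence $\mm^e=\Soc A\nsubseteq U$ while $\mm^j=0\subseteq U$ for $j>e$, so $d=e$, $\Soc A=\mm^d$, and $\mm=U\oplus\mm^d$; conversely these conditions give $\Soc A\cap U=\mm^d\cap U=0$. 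With this corrected bridge in place of your last two paragraphs, the rest of your argument goes through.
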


In \cite{AP,AZa}, the authors consider the procedure of reduction of an induced additive action. Namely, consider an $H$-pair $(A,U)$ corresponding to an induced additive action on a hypersurface $X$. Let $J \subseteq A$ be an ideal of dimension $n - k$ contained in $U$.

\begin{proposition} \cite[Proposition 2.20 and Corollary 2.23]{AZa}
\label{prop_reduct}
The pair $(A/J, U/J)$ corresponds to an induced additive action on a projective hypersurface $Z\subseteq \PP^{k}$, and $X$ is the projective cone over $Z$, i.e., for some choice of coordinates in $\PP^{n}$ and $\PP^{k}$, the equations of the hypersurfaces~$X$ and $Z$ are the same. Moreover, if $J$ is the maximal (with respect to inclusion) ideal of~$A$ contained in~$U$, then $Z$ is a non-degenerate hypersurface in $\PP^k$.
\end{proposition}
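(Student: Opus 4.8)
The plan is to establish the three assertions separately, using the explicit formula~\eqref{hyp_eq}. That $(A/J,U/J)$ is an $H$-pair is routine: since $J\subseteq U\subseteq\mm$ is an ideal, $A/J$ is a local finite-dimensional unital algebra with maximal ideal $\mm/J$, of dimension $(n+1)-(n-k)=k+1$; the subspace $U/J$ has dimension $(n-1)-(n-k)=k-1=\dim(\mm/J)-1$, so it is a hyperplane in $\mm/J$, and it generates $A/J$ because $U$ generates $A$. By Theorem~\ref{prth} this $H$-pair corresponds to an induced additive action on a hypersurface $Z\subseteq\PP(A/J)\cong\PP^k$ not contained in a hyperplane.

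To see that $X$ is the projective cone over $Z$, choose a basis of $A$ adapted to the flag $J\subseteq U\subseteq\mm\subseteq A$: a basis $v_1,\dots,v_s$ of $J$ (with $s=n-k$), extended by $u_1,\dots,u_{k-1}$ to a basis of $U$, then by one vector $e$ to a basis of $\mm$, then by the unit $1$ to a basis of $A$; on $\PP^k=\PP(A/J)$ take the coordinates dual to the images of $1,u_1,\dots,u_{k-1},e$. Write $q\colon A\to A/J$ for the quotient map. Since $q$ is a ring homomorphism it commutes with the expression $\ln(1+z/z_0)$ (a polynomial in the nilpotent $z/z_0$), and since $J\subseteq U=\ker\pi$ the projection $\pi$ of~\eqref{hyp_eq} factors as $\pi=\bar\pi\circ q$ with $\bar\pi\colon\mm/J\to(\mm/J)/(U/J)$; also the integer $d$ is unchanged, because $(\mm/J)^j\subseteq U/J$ if and only if $\mm^j\subseteq U$. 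Hence the defining polynomial of $X$ equals
\[
z_0^{\,d}\,\pi\!\left(\ln\!\Bigl(1+\tfrac{z}{z_0}\Bigr)\right)=z_0^{\,d}\,\bar\pi\!\left(\ln\!\Bigl(1+\tfrac{q(z)}{z_0}\Bigr)\right),
\]
which depends on $z$ only through $q(z)\in\mm/J$, i.e.\ only on the coordinates dual to $u_1,\dots,u_{k-1},e$, not on those dual to $v_1,\dots,v_s$; and the right-hand side is by construction the defining polynomial of $Z$ in the chosen coordinates. Therefore $X$ is the cone over $Z$ with vertex $\PP(J)$, and the two equations literally coincide.

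For the last assertion, put $A'=A/J$, $\mm'=\mm/J$, $U'=U/J$, and note that $\Soc A'\cap U'$ is an ideal of $A'$: if $a\in\Soc A'\cap U'$ and $b=\lambda+b_0\in\KK\oplus\mm'$, then $ba=\lambda a+b_0a=\lambda a\in U'$ and $ba\in\Soc A'$. If $J$ is maximal among ideals of $A$ contained in $U$, then $A'$ has no nonzero ideal contained in $U'$, since such an ideal would pull back under $q$ to an ideal of $A$ strictly between $J$ and $U$. Applying this to $\Soc A'$ and to $\Soc A'\cap U'$ — the latter of codimension at most one in $\Soc A'$ because $U'$ is a hyperplane in $\mm'$ and $\Soc A'\subseteq\mm'$ — forces $\dim\Soc A'=1$ with $\Soc A'\nsubseteq U'$; a short computation then identifies $\Soc A'$ with $\mm'^{\,d}$ (using $\mm^d\nsubseteq J$, so $\mm'^{\,d}\ne 0$) and gives $\mm'=U'\oplus\mm'^{\,d}$. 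Thus $A'$ is Gorenstein with socle $\mm'^{\,d}$, and Theorem~\ref{tgor} yields that $Z$ is non-degenerate. The routine part is the first paragraph; the computational core is the factorization displayed above, which yields both the cone structure and the coincidence of equations at once; and the step requiring the most care is the last, where one must ensure that the ideal produced from the socle indeed lies inside $U'$ — equivalently, that its $q$-preimage lies in $U$ — so that it actually contradicts the maximality of $J$.
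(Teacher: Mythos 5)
Your proof is correct, but note that the paper itself offers no argument to compare against: Proposition~\ref{prop_reduct} is imported verbatim from \cite{AZa} (Proposition 2.20 and Corollary 2.23), so what you have written is an independent reconstruction. Its three steps are all sound: the dimension count showing $(A/J,U/J)$ is an $H$-pair; the factorization $\pi=\bar\pi\circ q$ together with the fact that the algebra homomorphism $q$ commutes with the truncated logarithm, so the defining polynomial of $X$ depends on $z$ only through $q(z)$ (with the same degree $d$, since $J\subseteq U$ gives $\mm^j+J\subseteq U\Leftrightarrow\mm^j\subseteq U$), which yields the cone structure with vertex $\PP(J)$ and the literal coincidence of equations; and the maximality argument, whose key observation that $\Soc(A/J)\cap(U/J)$ is an ideal contained in $U/J$ is exactly the right lever. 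Two points in the last step should be made explicit rather than left as ``a short computation'': first, $\Soc(A/J)\ne 0$ because $\mm/J\ne 0$ (as $J\subseteq U\subsetneq\mm$) and $\mm/J$ is nilpotent; second, the identification $\Soc(A/J)=(\mm/J)^d$ requires $(\mm/J)^{d+1}=0$, which follows because $(\mm/J)^{d+1}$ is an ideal of $A/J$ contained in $U/J$ (since $\mm^{d+1}\subseteq U$) and hence vanishes by the same maximality argument. With these in place, $\mm/J=(U/J)\oplus(\mm/J)^d$ and Theorem~\ref{tgor}, read in the direction that Gorenstein pairs with socle $(\mm/J)^d$ correspond to non-degenerate hypersurfaces under the correspondence of Theorem~\ref{prth}, gives non-degeneracy of $Z$ exactly as you use it.
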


Finally, we are going to use the following criterion for normality of an affine or projective hypersurface. Denote by $\Xsing$ the singular locus of an algebraic variety $X$. It is well known that $\codim_X \Xsing \ge 2$ if $X$ is normal. For an arbitrary algebraic variety, the converse does not hold. However, this is true if $X$ is a hypersurface.

\begin{proposition} \cite[Section 5.1, Chapter 2]{Sha}
\label{snor}
A hypersurface $X$ is normal if and only if $\codim_X \Xsing \ge 2$.
\end{proposition}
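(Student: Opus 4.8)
The plan is to deduce Proposition~\ref{snor} from Serre's normality criterion: a reduced Noetherian ring is normal if and only if it satisfies $(R_1)$, i.e.\ every localization at a prime of height $\le 1$ is regular, together with $(S_2)$, i.e.\ $\mathrm{depth}\,\mathcal{O}_{X,x}\ge\min(2,\dim\mathcal{O}_{X,x})$ for every point $x$. I would prove the two implications separately, the key observation being that for a hypersurface the condition $(S_2)$ holds unconditionally.

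For the implication ``normal $\Rightarrow\codim_X\Xsing\ge 2$'' no hypersurface hypothesis is needed. If $x\in X$ satisfies $\codim_X\overline{\{x\}}\le 1$, then $\mathcal{O}_{X,x}$ is a normal Noetherian local domain of Krull dimension $\le 1$; a zero-dimensional such ring is a field, and a one-dimensional integrally closed Noetherian local domain is a discrete valuation ring. In either case $\mathcal{O}_{X,x}$ is regular, so $x\notin\Xsing$; hence $\Xsing$ contains no point of codimension $\le 1$, that is, $\codim_X\Xsing\ge 2$.

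For the converse, assume $\codim_X\Xsing\ge 2$. Condition $(R_1)$ is immediate: by definition $x\in\Xsing$ precisely when $\mathcal{O}_{X,x}$ fails to be a regular local ring, and by assumption no such $x$ has codimension $\le 1$, so all height-$\le 1$ localizations are regular. Condition $(S_2)$ follows from the stronger fact that $X$ is Cohen--Macaulay: locally (on an affine chart, in the projective case) $X$ is $\Spec R/(f)$ with $R$ a localization of a polynomial ring over $\KK$, hence regular and in particular Cohen--Macaulay, and $f$ a nonzero non-unit. Since $R$ is a domain, $f$ is a non-zerodivisor, and quotienting a Cohen--Macaulay ring by a single non-zerodivisor in the maximal ideal again yields a Cohen--Macaulay ring; thus $R/(f)$ is Cohen--Macaulay and a fortiori satisfies $(S_k)$ for all $k$. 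Serre's criterion then gives that $X$ is normal.

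The only genuinely ``hypersurface'' input is the Cohen--Macaulayness used in the last step (together with the reducedness implicit in the statement, so that $f$ may be taken square-free and Serre's criterion applies in the stated form); the rest is formal commutative algebra. Accordingly, I expect the main point needing care to be the passage from the global defining equation to the local rings $R/(f)$ and the verification that a regular sequence of length one preserves the Cohen--Macaulay property. A more elementary, geometric argument in the spirit of \cite{Sha} is also possible, but the route through Serre's criterion is the shortest rigorous one.
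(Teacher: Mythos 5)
Your argument is correct, but note that the paper itself does not prove this statement at all: Proposition~\ref{snor} is quoted verbatim from Shafarevich [Sha, Chapter~II, Section~5.1], so there is no internal proof to compare against. Your route through Serre's criterion is the standard modern one: the implication ``normal $\Rightarrow$ $\codim_X\Xsing\ge 2$'' is the general fact that a normal Noetherian local ring of dimension $\le 1$ is a field or a DVR, and the converse uses that a hypersurface is locally $R/(f)$ with $R$ regular and $f$ a non-zerodivisor, hence Cohen--Macaulay, so $(S_2)$ is automatic and only $(R_1)$ needs the codimension hypothesis. Two small points you pass over quickly but which are easily filled: the condition $(R_1)$ concerns localizations at height-one primes, i.e.\ generic points of divisors in $X$, while $\Xsing$ is naturally described via closed points; the bridge is that localizations of regular local rings are regular, so regularity at a closed point of the divisor outside $\Xsing$ gives regularity at its generic point. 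Second, as you note, one must take $X$ with its reduced structure (square-free local equation) for Serre's criterion to yield normality in the intended sense; for the hypersurfaces of this paper, which are irreducible, this is automatic, and then the local rings produced by the criterion are normal domains, which is what normality of the variety means. By contrast, the source cited in the paper argues more elementarily, essentially proving by hand for hypersurfaces that rational functions regular in codimension one are regular (a direct verification of the $(S_2)$-type extension property) rather than invoking Serre's criterion; your version is shorter given the standard commutative-algebra machinery, while Shafarevich's is self-contained at the level of his book.
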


\section{A criterion for normality}
\label{sn}

Let $X\subseteq\PP^n$ be a projective hypersurface of degree $d$ admitting an additive action. Then $X$ is given by
$$
f(z_0, z_1, \ldots, z_n) = z_0^d\pi\left(\ln\Bigl(1 + \frac{z}{z_0}\Bigr)\right) = 0,
$$
see equation~\eqref{hyp_eq}. Here $f$ is a homogeneous polynomial in $z_0, z_1, \ldots, z_n$ of degree $d$ without the term $z_0^d$; let us write the decomposition
\begin{equation}
\label{eqsumf}
f(z_0, z_1, \ldots,z_n) = \sum\limits_{k=1}^d z_0^{d-k} f_k(z_1, \ldots, z_n),
\end{equation}
where $f_k$ is a homogeneous polynomial in $z_1, z_2, \ldots, z_n$ of degree $k$. Moreover, the open orbit of the additive action is the set of all points $[z_0:z_1:\ldots: z_n]\in X$ such that $z_0\ne 0$. In particular, the complement to the open orbit is given by the following equation in $\PP^{n-1} = \{z_0=0\}$:
$$
f_d(z_1, z_2, \ldots, z_n) = 0.
$$
We call this complement the \emph{boundary} of $X$ and denote it by $X_0$. This is a hypersurface in $\PP^{n-1}$, so all its irreducible components have dimension~$n-2$.

The following proposition shows that normality of $X$ depends only on the polynomials $f_d$ and $f_{d-1}$. Let $f_d=p_1^{a_1}\ldots p_s^{a_s}$, where $p_i$ are pairwise distinct irreducible polynomials and $a_i\in\ZZ_{>0}$. Denote by $\widetilde{f_d}$ the polynomial $f_d$ divided by $p_1\ldots p_s$.

\begin{proposition}
\label{propnorm}
The hypersurface $X$ is normal if and only if the polynomials $\widetilde{f_d}$ and $f_{d-1}$ are coprime.
\end{proposition}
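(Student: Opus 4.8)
The plan is to use the Jacobian criterion together with Proposition~\ref{snor}: since $X=\{f=0\}$ is an irreducible hypersurface in $\PP^n$, it is normal if and only if $\Xsing=\{f=\pa_{z_0}f=\pa_{z_1}f=\dots=\pa_{z_n}f=0\}$ has codimension at least $2$ in $X$, i.e.\ dimension at most $n-2$. The singular locus automatically lies in the boundary hyperplane $\{z_0=0\}$: indeed, on the open orbit $\{z_0\ne 0\}$ the hypersurface is a single $\GG_a^m$-orbit, hence smooth (a homogeneous space is smooth), so $\Xsing\subseteq X_0=\{z_0=0,\ f_d=0\}$. So the whole question reduces to analysing the intersection of $\Xsing$ with the hyperplane at infinity, which is already a hypersurface of dimension $n-2$ in $\PP^{n-1}$; normality fails precisely when $\Xsing$ contains a whole irreducible component of $X_0$.

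Next I would compute the partial derivatives restricted to $\{z_0=0\}$. From \eqref{eqsumf} one gets, after setting $z_0=0$,
\[
\pa_{z_i}f\big|_{z_0=0}=\pa_{z_i}f_d\quad(i=1,\dots,n),\qquad
\pa_{z_0}f\big|_{z_0=0}=f_{d-1}.
\]
Therefore a point $[0:z_1:\dots:z_n]$ lies in $\Xsing$ if and only if $f_d(z)=0$, $f_{d-1}(z)=0$, and $\pa_{z_i}f_d(z)=0$ for all $i$. By the Euler relation $d\cdot f_d=\sum z_i\pa_{z_i}f_d$, the vanishing of all $\pa_{z_i}f_d$ already forces $f_d=0$ in characteristic zero, so
\[
\Xsing=\{[0:z]\mid \pa_{z_1}f_d(z)=\dots=\pa_{z_n}f_d(z)=0,\ f_{d-1}(z)=0\}
     =\Sing(X_0)\cap\{f_{d-1}=0\},
\]
where $\Sing(X_0)\subseteq\PP^{n-1}$ is the singular locus of the boundary hypersurface. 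The remaining point is purely a statement about the hypersurface $X_0=\{f_d=0\}\subseteq\PP^{n-1}$: writing $f_d=p_1^{a_1}\cdots p_s^{a_s}$, its singular locus is the union of the pairwise intersections $\{p_i=p_j=0\}$ (which have codimension $2$ in $\PP^{n-1}$, hence codimension $2$ in $X_0$ and thus harmless) together with the components $\{p_i=0\}$ for which $a_i\ge 2$; the latter is exactly the zero set of $\widetilde{f_d}=f_d/(p_1\cdots p_s)$ union the former. Concretely, a generic point of a component $\{p_i=0\}$ of $X_0$ is a singular point of $X_0$ iff $a_i\ge 2$ iff $p_i\mid\widetilde{f_d}$; so the codimension-one part of $\Sing(X_0)$ in $X_0$ is precisely $\{\widetilde{f_d}=0\}$.

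Putting this together: $\Xsing$ contains a codimension-one subset of $X_0$ (equivalently, of $X$) if and only if $\{f_{d-1}=0\}$ contains a whole component $\{p_i=0\}$ with $a_i\ge 2$, i.e.\ iff $p_i\mid f_{d-1}$ and $p_i\mid\widetilde{f_d}$ for some $i$, i.e.\ iff $\widetilde{f_d}$ and $f_{d-1}$ have a common irreducible factor. Hence $X$ is normal iff $\widetilde{f_d}$ and $f_{d-1}$ are coprime. The main obstacle, and the step needing the most care, is the structural claim about $\Sing(X_0)$ — namely that along a component $\{p_i=0\}$ the hypersurface $X_0$ is singular in codimension one exactly when $a_i\ge 2$, and that the contributions from $\{p_i=p_j=0\}$ and from $\Sing(\{p_i=0\})$ itself are always of codimension $\ge 2$ in $X_0$ and can be ignored; this is a local computation at a generic point of each component, differentiating $f_d=p_i^{a_i}\cdot(\text{unit along that component})$.
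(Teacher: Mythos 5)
Your proof is correct and takes essentially the same route as the paper: restrict the Jacobian criterion to $\{z_0=0\}$, note that $\pa f/\pa z_0$ and $\pa f/\pa z_i$ restrict there to $f_{d-1}$ and $\pa f_d/\pa z_i$, and observe that the only possible $(n-2)$-dimensional components of $\Xsing$ are the $\{p_i=0\}$ with $a_i\ge 2$ and $p_i\mid f_{d-1}$, which is exactly non-coprimality of $\widetilde{f_d}$ and $f_{d-1}$. The only blemish is a wording slip: the auxiliary loci $\{p_i=p_j=0\}$ and the singular loci of the individual components $\{p_i=0\}$ have dimension at most $n-3$, i.e.\ codimension $\ge 2$ in $X$ (codimension $\ge 1$ in $X_0$), not codimension $2$ in $X_0$; since the relevant criterion is $\codim_X\Xsing\ge 2$, they are indeed negligible and your argument stands.
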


\begin{proof}
We use Proposition~\ref{snor}. All points of the open orbit of the additive action on $X$ are smooth, so $\Xsing \subseteq X_0 = X \cap \{z_0 = 0\}$.
Since $X_0$ has codimension~$1$ in $X$, the hypersurface $X$ is not normal if and only if $\Xsing$ has an irreducible component of the same dimension as~$X_0$.

It is known that
$$
\Xsing = \left\{\frac{\pa f}{\pa z_0} = \frac{\pa f}{\pa z_1} = \ldots = \frac{\pa f}{\pa z_n} = f = 0\right\} \subseteq \PP^n.
$$
Since $\Xsing \subseteq \{z_0 = 0\}$ and equation~\eqref{eqsumf} holds, we have
\begin{gather*}
\Xsing = \left\{\frac{\pa f}{\pa z_0} = 0\right\} \cap \left\{\frac{\pa f}{\pa z_1} = \ldots = \frac{\pa f}{\pa z_n} = f = 0\right\} = \\
= \left\{f_{d-1} = 0 \right\} \cap \left\{\frac{\pa f_d}{\pa z_1} = \ldots = \frac{\pa f_d}{\pa z_n} = f_d = 0\right\} \subseteq \{z_0 = 0\}.
\end{gather*}

Recall that $f_d=p_1^{a_1}\ldots p_s^{a_s}$, where $p_i$ are pairwise distinct irreducible polynomials and $a_i\in\ZZ_{>0}$, so any irreducible component of $\Xsing$ of dimension $n-2$ is $\{p_i = 0\}$ for some $1 \le i \le s$.

Notice that irreducible components of $\left\{\frac{\pa f_d}{\pa z_1} = \ldots = \frac{\pa f_d}{\pa z_n} = f_d = 0\right\}$ of dimension $n-2$ are given by equations $\{p_i = 0\}$, where $a_i \ge 2$. Indeed, for any $1 \le k \le n$, we have
$$
\frac{\pa f_d}{\pa z_k} = \sum\limits_{j = 0}^s a_j p_j^{a_j - 1}\frac{\pa p_j}{\pa z_k} p_1^{a_1} \ldots p_{j-1}^{a_{j-1}}p_{j+1}^{a_{j+1}}\ldots p_s^{a_s},
$$
where all summands are divisible by $p_i$ if $a_i \ge 2$, and all summands except one are divisible by $p_i$ if $a_i = 1$.

So irreducible components of $\Xsing$ of dimension $n-2$ are the hypersurfaces $\{p_i = 0\}$, where $a_i \ge 2$ and $f_{d-1}$ is divisible by $p_i$. Thus, $X$ is normal if and only if $f_{d-1}$ and $\widetilde f_d = p_1^{a_1 - 1}\ldots p_s^{a_s - 1}$ are coprime.
\end{proof}

\begin{corollary}
\label{cornorm}
If the polynomial $f_d$ is square-free, then the hypersurface $X$ is normal.
\end{corollary}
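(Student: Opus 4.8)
The plan is to deduce this directly from Proposition~\ref{propnorm}. Write $f_d = p_1^{a_1}\ldots p_s^{a_s}$ with $p_i$ pairwise distinct irreducible polynomials. The hypothesis that $f_d$ is square-free means precisely that every exponent satisfies $a_i = 1$, so $f_d = p_1\ldots p_s$ and therefore $\widetilde{f_d} = f_d/(p_1\ldots p_s) = 1$.

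A nonzero constant polynomial is coprime to every polynomial, so in particular $\widetilde{f_d} = 1$ and $f_{d-1}$ are coprime. By Proposition~\ref{propnorm}, the hypersurface $X$ is normal.

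There is no real obstacle here: the corollary is just the degenerate case $a_1 = \ldots = a_s = 1$ of the criterion, where the set $\{p_i = 0 : a_i \ge 2\}$ of candidate non-normal loci is empty. The only thing worth spelling out is the elementary observation that square-freeness of $f_d$ is equivalent to $\widetilde{f_d}$ being a unit, after which coprimality with $f_{d-1}$ is automatic and Proposition~\ref{propnorm} applies verbatim. (One could alternatively note directly from the proof of Proposition~\ref{propnorm} that when $f_d$ is square-free the locus $\{\partial f_d/\partial z_1 = \ldots = \partial f_d/\partial z_n = f_d = 0\}$ has codimension at least $2$ in $\{z_0 = 0\}$, hence $\Xsing$ has codimension at least $2$ in $X$, but invoking the proposition is cleaner.)
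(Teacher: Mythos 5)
Your proof is correct and matches the paper's (implicit) argument: the paper states the corollary without a separate proof, precisely because square-freeness forces $\widetilde{f_d}=1$, which is trivially coprime to $f_{d-1}$, so Proposition~\ref{propnorm} applies at once. Nothing further is needed.
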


We need the following normality criterion for projective cones below.

\begin{lemma}
\label{l1}
Let $Z$ be a hypersurface in $\PP^k$ of dimension at least one and degree at least two. Consider $\PP^k$ as a subspace in $\PP^n$ with $n>k$. Then the projective cone $X$ over $Z$ in $\PP^n$ is normal if and only if $Z$ is normal. Moreover, the variety $X$ is singular.
\end{lemma}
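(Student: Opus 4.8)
The plan is to work with an explicit affine equation for the cone $X$ and compute its singular locus directly, using the normality criterion of Proposition~\ref{snor}. Choose homogeneous coordinates $z_0, \ldots, z_n$ on $\PP^n$ so that $\PP^k = \{z_{k+1} = \ldots = z_n = 0\}$, and let $g(z_0, \ldots, z_k) = 0$ be the equation of $Z$, a homogeneous polynomial of degree $e = \deg Z \ge 2$. Then the cone $X$ is cut out by the same polynomial $g$, now regarded as a polynomial in all $n+1$ variables; it does not involve $z_{k+1}, \ldots, z_n$ at all. The first step is to record that $\Xsing$ is defined inside $\PP^n$ by $g = \pa g/\pa z_0 = \ldots = \pa g/\pa z_k = 0$ (the derivatives with respect to $z_{k+1}, \ldots, z_n$ vanish identically), while $\Zsing \subseteq \PP^k$ is defined by the same equations inside $\PP^k$.

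From this one reads off that $\Xsing$ is exactly the cone over $Z^{\mathrm{sing}}$ together with the vertex $\PP^{n-k-1} = \{z_0 = \ldots = z_k = 0\}$; more precisely, a point of $\PP^n$ lies in $\Xsing$ iff either its first $k+1$ coordinates all vanish, or, after normalizing, the point $[z_0 : \ldots : z_k]$ lies in $Z^{\mathrm{sing}}$. Hence $\dim \Xsing = \max\{\, n-k-1,\ \dim Z^{\mathrm{sing}} + (n-k)\,\}$, with the convention that the second term is omitted when $Z$ is smooth. Since $e \ge 2$, the polynomial $g$ is not linear, so by Proposition~\ref{snor} applied to $Z$ (or directly, since a smooth quadric cone does not exist but here $e\ge 2$ and $\dim Z\ge 1$) one checks that $\Zsing$ is nonempty precisely when $Z$ is non-normal, i.e. $\codim_Z \Zsing \le 1$. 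Combining: $\codim_X \Xsing \ge 2$ if and only if both $n - k - 1 \le n - 2$ — which always holds since $k \ge 1$ — and $\codim_X(\text{cone over }\Zsing) \ge 2$; and the latter codimension in $X$ equals $\codim_Z \Zsing$ because coning adds $n-k$ to the dimension of both $Z$ and $\Zsing$. Thus $X$ is normal $\iff$ $\codim_Z\Zsing\ge 2 \iff Z$ is normal, using Proposition~\ref{snor} in both directions.

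For the last assertion, that $X$ is always singular: the vertex locus $\{z_0 = \ldots = z_k = 0\}$ is a nonempty projective subspace (here I need $n > k$, which is assumed), and every point of it lies in $\Xsing$ because all partials $\pa g/\pa z_i$ are homogeneous of degree $e - 1 \ge 1$ and hence vanish at such points, as does $g$ itself. So $\Xsing \ne \emptyset$, meaning $X$ is singular. Alternatively, if one prefers to avoid the vertex when $\dim Z \ge 1$ forces $Z$ itself to be a cone, one still gets singularity from $\Zsing$; but the vertex argument is cleanest and uses only $n > k$ and $e \ge 1$.

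The only mildly delicate point — and the one I expect to need the most care — is the bookkeeping on codimensions: one must be sure that coning up a subvariety $W \subseteq \PP^k$ of dimension $w$ to $\PP^n$ produces a subvariety of $X$ of dimension $w + (n-k)$, so that codimension inside $X$ is preserved, and that the vertex $\PP^{n-k-1}$ never has codimension less than $2$ in $X$ (which is where $\dim X = n - 1$ and $k \ge 1$ enter). Everything else is a direct translation of the Jacobian description of the singular locus, together with Proposition~\ref{snor}.
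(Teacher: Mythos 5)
Your argument is the same as the paper's: write the cone as $\{g=0\}\subseteq\PP^n$ with $g$ independent of $z_{k+1},\ldots,z_n$, observe that $\Xsing$ is the union of the cone over $\Zsing$ and the vertex $\PP^{n-k-1}=\{z_0=\ldots=z_k=0\}$, compare codimensions, and get singularity of $X$ from the nonempty vertex. The codimension bookkeeping for the cone over $\Zsing$ is correct ($\codim_X$ of that piece equals $\codim_Z\Zsing$), and the singularity argument is fine.

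However, there is an off-by-one error exactly at the point you yourself flagged as delicate. You require $n-k-1\le n-2$, i.e.\ $k\ge 1$, for the vertex not to obstruct normality; but $\dim X=n-1$, so the vertex $\PP^{n-k-1}$ has codimension exactly $k$ in $X$, and Proposition~\ref{snor} needs codimension $\ge 2$, i.e.\ $k\ge 2$. The correct inequality is $n-k-1\le n-3$, and it is precisely the hypothesis $\dim Z=k-1\ge 1$ that supplies $k\ge 2$ (this is how the paper argues: $\codim_X X_2=(n-1)-(n-k-1)=k\ge 2$). The distinction matters: for $k=1$ the lemma is false --- the cone over two reduced points of $\PP^1$ is a union of two hyperplanes through a common $\PP^{n-2}$, which is not normal although $Z$ is --- so your stated justification ``$k\ge 1$'' would prove too much. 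Separately, the aside that ``$\Zsing$ is nonempty precisely when $Z$ is non-normal'' is false (a normal surface can have isolated singularities); it is not used in your main chain of equivalences, which correctly runs through $\codim_Z\Zsing\ge 2$, so just delete it. With the vertex codimension corrected to $k\ge 2$ via $\dim Z\ge 1$, your proof coincides with the paper's.
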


\begin{proof}
A point $z = [z_0:\ldots:z_k] \in Z = \{g(z_0,\ldots,z_k) = 0\} \subseteq \PP^k$ is singular if and only if $\frac{\pa g}{\pa z_i}(z) = 0$ for any $0 \le i \le k$. Let $X = \{g(z_0, \ldots, z_k) = 0\} \subseteq \PP^n$ be the projective cone over~$Z$. Since $\frac{\pa g}{\pa z_{k+j}} = 0$, $1 \le j \le n-k$, the singular locus $\Xsing$ of $X$ is divided into two subsets:
\smallskip
\begin{enumerate}
    \item $X_1 = \{[z_0:\ldots:z_n], \; \text{ where } [z_0:\ldots:z_k] \in \Zsing\}$;
    \item $X_2 = \{[0:\ldots:0:z_{k+1}:\ldots:z_n]\} \cong \PP^{n-k-1}$.
\end{enumerate}
\smallskip
We again use that a projective hypersurface is normal if and only if the codimension of the singular locus is at least~2. Notice that
\[\begin{aligned}
\codim_X X_1 &= \codim_Z \Zsing;\\
\codim_X X_2 &= (n-1) - (n-k-1) = k \ge 2.
\end{aligned}\]
So the condition $\codim_X \Xsing \ge 2$ is equivalent to $\codim_Z \Zsing \ge 2$. Moreover, we have $\Xsing\supseteq\PP^{n-k-1}\ne\varnothing$.
\end{proof}

\section{Non-degenerate hypersurfaces with prescribed boundary}
\label{sp}

Let a hypersurface $X \subseteq \PP^n$ of degree $d \ge 2$ admit an additive action. We let $(A, U)$ be the corresponding $H$-pair, $\mm$ be the maximal ideal of~$A$,
$$
\pi\colon \mm \to \mm/U \cong \KK,\quad \text{and} \quad p\colon A\setminus\{0\} \to \PP(A)
$$
be the canonical projection. We identify $\PP^n$ with $\PP(A)$, and the boundary $X_0$ is contained in $\PP(\mm)\cong\PP^{n-1}$. According to \cite[Corollary~2.18]{AZa}, we have
\[X_0 = p(\{z \in \mm \mid z^d \in U\}),\]
i.e., $X_0 \subseteq \PP(\mm)$ is given by the equation $\pi(z^d) = 0$, $z \in \mm$.

\begin{lemma}\label{lbeq}
For any decomposition $\mm = L \oplus \mm^2$ into a direct sum of subspaces, the boundary~$X_0$ is given in $\PP(\mm)$ by the equation
\[\pi(z_L^d) = 0,\]
where $z = z_{L} + z_{\mm^2}$, $z_{L} \in L$, $z_{\mm^2} \in \mm^2$. In particular, the equation of $X_0$ depends on at most $\dim \mm/\mm^2$ coordinates.
\end{lemma}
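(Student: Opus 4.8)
The plan is to work straight from the description recalled just above the statement, namely that $X_0 \subseteq \PP(\mm)$ is cut out by the single equation $\pi(z^d) = 0$, $z \in \mm$. Fixing the decomposition $\mm = L \oplus \mm^2$ and writing $z = z_L + z_{\mm^2}$ with $z_L \in L$, $z_{\mm^2} \in \mm^2$, it suffices to prove the pointwise identity $\pi(z^d) = \pi(z_L^d)$ for every $z \in \mm$; the assertion about the number of coordinates will then follow formally by choosing an adapted basis.

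The key input I would use is the characterization of the exponent $d$: by definition $d$ is the greatest positive integer with $\mm^d \nsubseteq U$, so $\mm^{d+1} \subseteq U$ and hence $\pi(\mm^{d+1}) = 0$. Since $A$ is commutative, the binomial theorem gives
\[
z^d = (z_L + z_{\mm^2})^d = z_L^d + \sum_{k=1}^{d} \binom{d}{k}\, z_L^{\,d-k}\, z_{\mm^2}^{\,k}.
\]
For $1 \le k \le d$ one has $z_L^{\,d-k} \in \mm^{d-k}$ (with the convention $\mm^0 = A$, relevant when $k = d$) and $z_{\mm^2}^{\,k} \in (\mm^2)^k = \mm^{2k}$, whence $z_L^{\,d-k} z_{\mm^2}^{\,k} \in \mm^{\,d-k+2k} = \mm^{\,d+k} \subseteq \mm^{d+1} \subseteq U$. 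Applying $\pi$ therefore annihilates every term of the sum, and we obtain $\pi(z^d) = \pi(z_L^d)$, as wanted.

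For the final sentence, I would fix a basis $e_1, \dots, e_r$ of $L$, so that $r = \dim L = \dim \mm/\mm^2$, extend it by a basis of $\mm^2$ to a basis of $\mm$, and pass to the corresponding linear coordinates $z_1, \dots, z_r, z_{r+1}, \dots$ on $\mm$. Then $z_L$ involves only $z_1, \dots, z_r$, so the polynomial $\pi(z_L^d)$ depends on at most these $r = \dim \mm/\mm^2$ coordinates, and this polynomial is the equation of $X_0$ by the identity just proved.

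I do not anticipate a genuine obstacle: the argument is essentially the binomial expansion combined with the degree bound $\mm^{d+1}\subseteq U$. The only points that require a little care are (i) invoking the exact characterization of $d$ so that $\mm^{d+1}\subseteq U$ is available, and (ii) observing that every mixed term with $k\ge 1$ — including the extreme case $k=d$, where $z_L^{\,d-k}=1$ — lands in $\mm^{d+1}$, which holds because $d-k+2k \ge d+1$ for all $k\ge 1$.
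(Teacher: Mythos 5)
Your proof is correct and follows the same route as the paper: both use that $d$ is the maximal integer with $\mm^d \nsubseteq U$, hence $\mm^{d+1}\subseteq U$ and $\pi(\mm^{d+1})=0$, so all mixed terms in the expansion of $(z_L+z_{\mm^2})^d$ are killed by $\pi$ and $\pi(z^d)=\pi(z_L^d)$. You simply spell out the binomial expansion and the coordinate count in more detail than the paper does.
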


\begin{proof}
According to~\cite[Theorem 5.1]{AS}, the degree $d$ of $X$ is the maximal number such that $\mm^d \nsubseteq U$. Then $\pi(\mm^{d+1}) \subseteq \pi(U) = 0$, so $\pi(z^d) = \pi((z_{L}+z_{\mm^2})^d) = \pi(z_{L}^d)$. So $\pi(z_{L}^d) = 0$ is the equation of $X_0$ and it depends on at most $\dim L = \dim \mm - \dim \mm^2$ coordinates.
\end{proof}

\begin{proposition}
\label{boudcone_prop}
The boundary $X_0$ is a degenerate projective hypersurface in $\PP(\mm)$. In particular, $X_0$ is either singular or a hyperplane.
\end{proposition}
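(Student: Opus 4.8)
The plan is to extract both assertions directly from Lemma~\ref{lbeq}. Recall that $d$ is the greatest positive integer with $\mm^d\nsubseteq U$; since $d\ge 2$, in particular $\mm^2\ne 0$. Fix a decomposition $\mm=L\oplus\mm^2$ and choose homogeneous coordinates on $\PP(\mm)\cong\PP^{n-1}$ so that the first $\dim L$ of them are dual to a basis of $L$ and the remaining $\dim\mm^2$ of them are dual to a basis of $\mm^2$. By Lemma~\ref{lbeq} the equation of $X_0$ in these coordinates, namely $\pi(z_L^d)=0$, involves only the $\dim L=\dim\mm/\mm^2$ coordinates attached to $L$. Because $\mm^2\ne 0$ we have $\dim\mm/\mm^2<\dim\mm=n$, so this linear change of coordinates turns the defining polynomial of $X_0$ into one using strictly fewer than $n$ of the $n$ homogeneous coordinates of $\PP(\mm)$; by the definition of (non-)degeneracy, $X_0$ is a degenerate hypersurface. (That $X_0$ is a genuine hypersurface, i.e. its equation is not identically zero, is again a consequence of the maximality of $d$: if $z^d\in U$ held for all $z\in\mm$, then polarization over a field of characteristic zero would give $\mm^d\subseteq U$, a contradiction; hence $\deg X_0=d$.)

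For the final sentence I would prove the general fact that a degenerate projective hypersurface is either a hyperplane or singular, and then observe that here $\deg X_0=d\ge 2$, so in fact $X_0$ is always singular. Keeping the coordinates above, write $g=\pi(z_L^d)$ for the equation of $X_0$; it does not involve any coordinate attached to $\mm^2$. Choose a nonzero vector $v\in\mm^2$ and set $P=[v]\in\PP(\mm)$. All coordinates occurring in $g$ vanish at $P$ and $g$ is homogeneous of positive degree, so $g(P)=0$, i.e. $P\in X_0$. If $\deg g=1$, then $X_0$ is a hyperplane; if $\deg g\ge 2$, then for every coordinate $z_i$ attached to $L$ the partial derivative $\partial g/\partial z_i$ is homogeneous of degree $\ge 1$ in the $L$-coordinates and therefore vanishes at $P$, while $\partial g/\partial z_j\equiv 0$ for every coordinate $z_j$ attached to $\mm^2$. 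Hence the whole gradient of $g$ vanishes at $P$, so $P$ is a singular point of $X_0$; equivalently, $P$ lies on the vertex of the projective cone structure on $X_0$ provided by its degeneracy.

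I do not anticipate a genuine obstacle: the whole argument is Lemma~\ref{lbeq} combined with the Jacobian criterion for hypersurfaces. The points needing a little care are translating the statement that the equation of $X_0$ depends on at most $\dim\mm/\mm^2$ coordinates into the formal notion of a degenerate hypersurface, verifying $g\not\equiv 0$ so that $X_0$ is indeed a hypersurface and $X_0^{\mathrm{sing}}$ is meaningful, and tracking $\deg X_0$ in the last step, which shows that under the standing hypothesis $d\ge 2$ the boundary $X_0$ is never a hyperplane but always singular.
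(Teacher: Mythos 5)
Your first paragraph is fine, and in fact it is a cleaner, direct version of the paper's argument: the paper proves degeneracy by contradiction (non-degeneracy would force $\dim\mm/\mm^2=n$, hence $\mm^2=0$, which is impossible for an $H$-pair), while you simply note $\mm^2\supseteq\mm^d\neq 0$ and invoke Lemma~\ref{lbeq}; the polarization argument showing $\pi(z^d)\not\equiv 0$ is a correct and useful addition.

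The second paragraph, however, has a genuine gap, and the stronger conclusion you draw is false. The boundary $X_0$ is the complement of the open orbit, i.e.\ a reduced variety, whose defining form $f_d=\pi(z_L^d)$ need not be square-free; so the degree of the variety $X_0$ need not be $d$, and applying the Jacobian criterion to the possibly non-reduced polynomial $g=f_d$ does not detect singularity of $X_0$. Concretely, for $A=\KK[x]/(x^{n+1})$ with $U=\langle x,\ldots,x^{n-1}\rangle$ (the maximal-degree hypersurface of Section~\ref{hmd}, Example~\ref{exYDsegm}) one has $f_d=\tfrac{(-1)^{d-1}}{d}z_1^{d}$ with $d=n\ge 2$, so $X_0$ is the hyperplane $\{z_1=0\}$ in $\PP(\mm)$: it is smooth, even though the gradient of $f_d$ vanishes identically along it. Thus your claim that for $d\ge 2$ the boundary is never a hyperplane and always singular contradicts the proposition itself (which explicitly allows the hyperplane case) and the paper's own examples; your point $P=[v]$, $v\in\mm^2$, is a critical point of the form $f_d$ but not necessarily a singular point of the variety $X_0$. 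The correct route for the second sentence is the one the paper takes: since $X_0$ is degenerate, it is a projective cone over a hypersurface $Z$ in a smaller projective subspace (after passing to the non-degenerate reduction), and Lemma~\ref{l1} shows such a cone is singular unless $X_0$ is a projective subspace, i.e.\ a hyperplane; alternatively, you could repeat your gradient argument with the radical of $f_d$, distinguishing the case where the radical is linear (hyperplane) from the case where it has degree at least two (singular at $P$).
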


\begin{proof}
If $X_0$ is non-degenerate, then $\dim \mm - \dim \mm^2 = n$, so $\mm^2 = 0$ and there is no $H$-pair $(A,U)$ corresponding to such an algebra $A$. The additive action
$\GG_a^n\times\PP^n\to\PP^n$ corresponding to the algebra $A$ is
$$
(a_1,\ldots,a_n)\circ [z_0:z_1:\ldots:z_n]=[z_0:z_1+a_1z_0:\ldots:z_n+a_nz_0],
$$
and the closure $X$ of an orbit of a subgroup of codimension one in $\GG_a^n$ is a hyperplane in $\PP^n$. The boundary $X_0$ is the intersection of $\PP(\mm)$ with the hyperplane $X$, so it is a hyperplane in $\PP(\mm)$. But a hyperplane in $\PP(\mm)$ is a degenerate hypersurface.

The last assertion follows from Lemma~\ref{l1}.
\end{proof}

The following result shows that any non-degenerate projective hypersurface up to taking projective cone can be realized as the boundary $X_0$.

\begin{theorem}
\label{thb}
For any non-degenerate hypersurface $Z_0\subseteq\PP^{k-1}$, there exist an integer $n$ and a non-degenerate hypersurface $X\subseteq\PP^n$ with an additive action such that the boundary $X_0$ is a projective cone over~$Z_0$ in $\PP^{n-1}$.
\end{theorem}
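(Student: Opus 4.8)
The strategy is to build the $H$-pair $(A,U)$ explicitly from the polynomial $g$ defining $Z_0 \subseteq \PP^{k-1}$. Write $Z_0 = \{g(z_1,\ldots,z_k)=0\}$ with $g$ homogeneous of degree $d$; since $Z_0$ is non-degenerate, $g$ genuinely depends on all $k$ variables. By Theorem~\ref{tgor}, producing a non-degenerate hypersurface with an induced additive action whose boundary equation is (up to renaming coordinates) $g$ amounts to producing a Gorenstein local algebra $A = \KK \oplus \mm$ with socle $\mm^d$ of dimension $1$, a hyperplane $U$ with $\mm = U \oplus \mm^d$, and a decomposition $\mm = L \oplus \mm^2$ with $\dim L = k$, such that the map $z_L \mapsto \pi(z_L^d)$ (in the notation of Lemma~\ref{lbeq}) realizes the polynomial $g$. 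The point of Proposition~\ref{prop_reduct} is then that if the maximal ideal of $A$ contained in $U$ is nonzero, the resulting $X$ is automatically a projective cone over the non-degenerate hypersurface attached to the quotient; so in fact we want to arrange that $X$ itself is non-degenerate (Gorenstein with $\mm = U \oplus \mm^d$) but that its \emph{boundary} $X_0$ is a cone over $Z_0$, the extra cone directions coming from the part of $\mm^2$ not reached by powers of $L$.

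The concrete construction I would use is the standard ``apolar'' (Macaulay inverse system) algebra of the form $g$. Take the polynomial ring $S = \KK[x_1,\ldots,x_k]$ acting by differentiation on the dual polynomial ring, let $g^\perp \subseteq S$ be the ideal of differential operators annihilating $g$, and set $A = S/g^\perp$. This is the classical fact that $A$ is a local Artinian Gorenstein $\KK$-algebra with socle in degree $d$, one-dimensional socle, and Hilbert function symmetric about $d/2$; its degree-$1$ part has dimension $k$ and multiplication of $k$ linear forms recovers $g$ as the socle-degree component. Then I would choose $L$ to be the image of the degree-$1$ part of $S$, choose the socle generator to span $\mm^d$, and let $U$ be any hyperplane in $\mm$ with $\mm = U \oplus \mm^d$ and $L \subseteq \mm^2 \oplus L$ chosen so that $\pi$ picks out precisely the coefficient of the socle generator. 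By construction $\pi(z_L^d)$ equals $g$ evaluated on the coordinates of $z_L$, so by Lemma~\ref{lbeq} the boundary $X_0 \subseteq \PP(\mm) \cong \PP^{n-1}$, with $n = \dim A - 1$, is cut out by $g$ viewed as a polynomial in the $k$ coordinates dual to $L$ — which is exactly the projective cone over $Z_0$ with vertex $\PP(\mm^2)$. Finally, $(A,U)$ is an $H$-pair because $U$ generates $A$: $U$ contains (a complement of $\mm^d$ in) $L$, hence generates all of $\mm$ modulo $\mm^d$, and then $\mm^d$ too since it is a product of elements of $L$.

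The main obstacle is bookkeeping around the hyperplane $U$: I must simultaneously guarantee (i) $\mm = U \oplus \mm^d$ so that Theorem~\ref{tgor} applies and $X$ is non-degenerate, (ii) that the projection $\pi\colon \mm \to \mm/U$ used to write the equation is, after identification with $\mm^d \cong \KK$, literally ``read off the socle coordinate'', so that $\pi(z_L^d) = g$, and (iii) that $L \cap U$ still has dimension $k$ (i.e. $L \subseteq U$) so that $X_0$ depends on exactly the right $k$ coordinates. These are compatible precisely because $L$, being spanned by degree-$1$ elements, is disjoint from the degree-$d$ socle line; choosing $U \supseteq L$ and $U \oplus \mm^d = \mm$ is then possible, and unwinding equation~\eqref{hyp_eq} shows the leading form of the resulting $f$ is $g(z_1,\ldots,z_k)$ in the $L$-coordinates. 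The remaining routine check is that the integer $n = \dim A - 1$ is strictly larger than $k-1$ exactly when $\mm^2 \neq L$, i.e. when $g$ has degree $d \ge 2$ (if $d = 1$, $Z_0$ is a hyperplane and one takes a direct construction), and that the resulting $X$ is genuinely a cone, which is immediate since the defining polynomial omits the coordinates on $\mm^2 \ominus (\text{image of powers of }L)$.
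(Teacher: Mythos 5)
Your construction is correct, but it follows a genuinely different route from the paper. The paper proceeds ``by hand'': it defines $A$ by generators $x_1,\dots,x_k$ with all monomials of degree $d+1$ set to zero and every degree-$d$ monomial $x^\lambda$ identified with $b_\lambda x^{\lambda_0}$, tunes the constants $b_\lambda$ so that $\pi(z_L^d)$ is proportional to $g$, and — since this algebra need not be Gorenstein, so the resulting hypersurface may be degenerate — finishes by invoking the reduction procedure of Proposition~\ref{prop_reduct}, which does not change the equation, to obtain a non-degenerate $X$. You instead take the apolar algebra $A=\KK[x_1,\dots,x_k]/\mathrm{Ann}(g)$ of Macaulay's inverse-system duality: this is graded Artinian Gorenstein with one-dimensional socle $A_d=\mm^d$, and with $U=A_1\oplus\dots\oplus A_{d-1}$ one has $\mm=U\oplus\mm^d$ and $U$ generating, so Theorem~\ref{tgor} applies directly and no reduction step is needed; Lemma~\ref{lbeq} then identifies the boundary with the cone over $\{g=0\}$ with vertex $\PP(\mm^2)$. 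What your approach buys is that Gorenstein-ness and non-degeneracy of $X$ come for free from duality, and $\dim A$ (hence $n$) is typically smaller; what it costs is reliance on the classical Macaulay correspondence, which you should cite or prove, together with two small verifications you currently gloss: (i) the identity $z_L^d=d!\,g(z_1,\dots,z_k)\,e$ in $A_d$ for a suitably normalized socle generator $e$ (a one-line multinomial computation, using characteristic zero), and (ii) that non-degeneracy of $Z_0$ is exactly what forces $\mathrm{Ann}(g)_1=0$, i.e.\ $\dim A_1=k$ — this is stronger than ``$g$ depends on all $k$ variables'' and is the precise point where the hypothesis enters. With those two remarks made explicit, your argument is a complete and somewhat slicker proof of the theorem; the paper's version remains more elementary and self-contained, and its intermediate algebras reappear in Section~\ref{asYD} as the algebras $A_{\Lambda,B}$ for the simplex diagram, which is an additional reason the authors organize the proof around them.
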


\begin{proof}
Let $Z_0 = \{g = 0\} \subseteq \PP^{k-1}$, where $g$ is a non-zero homogeneous polynomial of degree~$d$ in variables $z_1, z_2,\ldots, z_k$. Let us write $g$ in the following way:
    $$g(z_1, z_2,\ldots, z_k) = \sum\limits_{|\lambda| = d} a_{\lambda}z^{\lambda},$$
where the summation is taken over all $k$-tuples $\lambda = (\lambda_1, \lambda_2, \ldots, \lambda_k)$ of non-negative integers, $|\lambda| = \lambda_1 + \lambda_2 + \ldots + \lambda_k$, and $z^\lambda = z_1^{\lambda_1}\ldots z_k^{\lambda_k}$. Since $g$ is non-zero, there is at least one~$\lambda_0$ with $a_{\lambda_0} \ne 0$.

We are going to construct an $H$-pair $(A, U)$ such that the intersection of the corresponding hypersurface $X$ with the hyperplane $\{z_0 = 0\}$ is given by the equation $g = 0$. We define the algebra $A$ by generators $x_1, x_2, \ldots, x_k$ and relations
\[\begin{aligned}
x^{\lambda} &= 0 \hspace{1cm} \text{for all $\lambda$ with }\hspace{0.1cm} |\lambda| = d+1, \\
x^{\lambda} &= b_\lambda x^{\lambda_0} \hspace{0.2cm} \text{for all $\lambda$ with}\hspace{0.1cm}|\lambda| = d, \; \lambda\ne \lambda_0,
\end{aligned}\]
where $b_\lambda\in\KK$ are some coefficients that we yet need to define.
The maximal ideal $\mm$ of the algebra $A$ has the monomial basis consisting of all monomials in $x_i$ of degree less than $d$ and of one monomial $x^{\lambda_0}$ of degree $d$. We define $U$ as the linear span of all such monomials except $x^{\lambda_0}$.

By construction, $\mm = \langle x_1, \ldots, x_k\rangle \oplus \mm^2$ and $\mm = U \oplus \mm^d$. Let $X'\subseteq\PP(A)$ be the projective hypersurface corresponding to the pair $(A,U)$. According to Lemma~\ref{lbeq}, the boundary~$X'_0$ is given by the equation $\pi(z_L^d) = 0$, where $z_L = z_1x_1 + z_2x_2 + \ldots + z_kx_k$ and $\pi\colon \mm \to \mm^d = \langle x^{\lambda_0}\rangle$ is the projection of $\mm$ on $\mm^d$ along~$U$. Notice that
$$z_L^d = (z_1x_1 + z_2x_2 + \ldots + z_kx_k)^d = \sum\limits_{|\lambda| = d} c_{\lambda}z^{\lambda}x^{\lambda} = \big(\sum\limits_{|\lambda| = d} b_\lambda c_\lambda z^\lambda \big)x^{\lambda_0},$$
where $c_{\lambda} = \frac{(\lambda_1 + \lambda_2 + \ldots + \lambda_k)!}{\lambda_1!\lambda_2!\ldots\lambda_k!} \ne 0$ are the multinomial coefficients.
This implies that the boundary has the equation
$$\sum\limits_{|\lambda| = d} b_\lambda c_\lambda z^\lambda = 0.$$

Let us take $b_\lambda = \frac{a_\lambda c_{\lambda_0}}{c_\lambda a_{\lambda_0}}$. This agrees with the condition $b_{\lambda_0}=1$. Moreover, the left-hand side of the last equation becomes equal to $g$ multiplied by $\frac{c_{\lambda_0}}{a_{\lambda_0}}$.

So, we constructed the $H$-pair $(A,U)$ corresponding to a possibly degenerate hypersurface~$X'$ whose boundary~$X'_0$ is given by the equation $g = 0$. In order to make the hypersurface non-degenerate, we apply the reduction procedure to the $H$-pair $(A, U)$ and obtain a new $H$-pair $(A/J, U/J)$. 
Since reduction does not change the equation of the hypersurface, the $H$-pair $(A/J, U/J)$ corresponds to the desired non-degenerate hypersurface~$X\subseteq\PP^n$. Indeed, the boundaries $X_0 = X \cap \{z_0 = 0\}$, $X'_0 = X' \cap \{z_0 = 0\}$, and the hypersurface~$Z_0$ have the same equation $g = 0$, so $X_0$ is a projective cone over $Z_0$ since $Z_0$ is non-degenerate.
\end{proof}

\section{Algebras associated with Young diagrams}
\label{asYD}

In this section, we illustrate the result obtained above in the case of projective hypersurfaces coming from algebras associated with Young diagrams.

Let $k \in \NN$. We introduce some notation. Consider the product order $\lecur$ on $\Zgezero^k$, i.e., $\lambda \lecur \mu$ means that $\lambda_i \le \mu_i$ for any $1 \le i \le k$, where $\lambda,\mu \in \Zgezero^k$. Let $\Lambda$ be a Young diagram of dimension~$k$, i.e., a finite subset $\Lambda \subseteq \Zgezero^k$ such that for any $\mu \in \Lambda$ and any $\lambda\in \ZZ^k_{\geq 0}$ such that $\lambda \lecur \mu$ we have $\lambda \in \Lambda$. Let us assume that for any $1 \le i \le k$, there exists $\lambda \in \Lambda$ such that $\lambda_i \ne 0$.

\begin{definition}
An element $\mu$ of a Young diagram $\Lambda$ is a \emph{corner cell} if there is no $\mu' \in \Lambda$ with $\mu \prec \mu'$. In other words, corner cells are maximal elements in $\Lambda$ with respect to the partial order $\lecur$. Denote the set of corner cells of $\Lambda$ by $\corn(\Lambda)$.
\end{definition}

Any Young diagram is defined by the set of its corner cells.

\smallskip

Let $\KK[x] := \KK[x_1, \ldots, x_k]$ be a polynomial algebra. Recall that for any $\lambda \in \Zgezero^k$, we denote
$|\lambda| = \lambda_1 + \lambda_2 + \ldots + \lambda_k$ and $x^\lambda = x_1^{\lambda_1}\ldots x_k^{\lambda_k}$.

\begin{construction}
\label{constrYD_0}
Let $\Lambda$ be a Young diagram of dimension $k$. One can consider a finite-dimensional local algebra $A_\Lambda$ defined as the quotient algebra of $\KK[x_1, \ldots, x_k]$ by relations
\begin{equation} \label{eqYDrel0}
x^\lambda = 0 \;\;\text{ if } \lambda \notin \Lambda.
\end{equation}
\end{construction}

The algebra $A_\Lambda$ is monomial, i.e., the defining ideal of this algebra is generated by monomials. Moreover, any finite-dimensional monomial algebra has this form.

Notice that $\Soc A_\Lambda = \langle x^\mu \mid \mu \in \corn(\Lambda)\rangle$. So, the algebra $A_\Lambda$ is Gorenstein if and only if $\Lambda$ has a unique corner cell, i.e., $\Lambda$ is a parallelepiped.

\smallskip

Before we proceed, let us give the definitions of exceptional and non-exceptional Young diagrams.  Suppose that $\Lambda$ is a $k$-dimensional Young diagram. For $1 \le i \le k$, we call the diagram $\Lambda$ \emph{exceptional with respect to the $i$-th coordinate} if for any $\lambda \in \Lambda$ we have $\lambda_i = 0$ or $\lambda = e_i$. If this does not hold for all $1 \le i \le k$, the diagram $\Lambda$ is said to be \emph{non-exceptional}.

Now, we come to the following construction.

\begin{construction}
\label{constrYD}
Suppose that we have a Young diagram $\Lambda$ of dimension $k$ and a set of constants $B = \{b_\mu \in \KK, \; \mu \in \corn(\Lambda)\}$ such that at least one constant is non-zero. We construct an $H$-pair $(A_{\Lambda, B}, U_{\Lambda, B})$ using this data.

Define $A_{\Lambda,B}$ as the quotient algebra of $A_\Lambda$ by relations
\begin{equation} \label{eqYDrel}
b_\nu x^\mu = b_\mu x^\nu   \;\;\text{ if } \mu,\nu \in \corn(\Lambda),
\end{equation}
and $U_{\Lambda,B}$ as the linear span of $x^\lambda$, $\lambda \in \Lambda \setminus (\corn(\Lambda) \cup \{0\})$. According to~\eqref{eqYDrel}, there exists a monomial $x^{\corn} \in A$ such that
\begin{equation}
x^\mu = b_\mu x^{\corn}   \;\;\text{ for any } \mu \in \corn(\Lambda)
\end{equation}
(note that the coefficient of the monomial $x^{corn}$ is not necessarily equal to $1$).

Notice that the algebra $A_{\Lambda, B}$ associated with a diagram $\Lambda$ exceptional with respect to the $i$-th coordinate is isomorphic to the algebra associated with the same diagram without the cell~$e_i$.

Assume that the diagram $\Lambda$ is non-exceptional. Then, the algebra $A_{\Lambda,B}$ is local with the maximal ideal $\mm = U_{\Lambda,B} \oplus \langle x^{\corn}\rangle$; the subspace~$U_{\Lambda,B}$ has codimension one in~$\mm$ and generates~$A_{\Lambda,B}$. So, $(A_{\Lambda,B}, U_{\Lambda,B})$ is an $H$-pair.

Let $n$ be the number of cells in $\Lambda$ without corner cells. Clearly, $\dim A_{\Lambda,B} = n+1$.

\begin{definition}
We call the $H$-pair $(A_{\Lambda,B}, U_{\Lambda,B})$ the \emph{$H$-pair associated with $(\Lambda, B)$}.
\end{definition}
\end{construction}

\begin{example}
\label{YD1ex}
Consider the Young diagram $\Lambda$ in Figure~\ref{YD1fig} with $b_\mu = 1$ for both $\mu \in \corn(\Lambda)$. Then
\[A_{\Lambda,B} = \KK[x, y]/(x^4,\, y^3,\, x^2y^2,\, x^3y - xy^2).\]
Notice that it is not Gorenstein. Indeed, $x^{\corn} \in \Soc A$, and for $a = x^2y - y^2$, we have $ax = x^3y - xy^2 = 0$ and $ay = 0 - 0 = 0$, so $a \in \Soc A$ as well.
\begin{figure}[h]
\begin{tikzpicture}[x=0.75pt,y=0.75pt,yscale=-1,xscale=1]
\def\d{15}
\coordinate (e1) at (\d,0);
\coordinate (e2) at (0,-\d);
\foreach \x in {0,..., 3}
    \foreach \y in {0,1}
        \draw ($2*\x*(e1)+2*\y*(e2)$) + (-\d,-\d) rectangle ++(\d,\d);
\foreach \x/\y in {3/0, 2/1, 0/2}
    \filldraw[precornfill] ($2*\x*(e1)+2*\y*(e2)$) + (-\d,-\d) rectangle ++(\d,\d);
\foreach \x/\y in {3/1, 1/2}
    \filldraw[cornfill] ($2*\x*(e1)+2*\y*(e2)$) + (-\d,-\d) rectangle ++(\d,\d);
\draw ($2*0*(e1)$) node{$1^{\phantom{1}}\!\!$};
\draw ($2*1*(e1)$) node{$x^{\phantom{1}}\!\!$};
\draw ($2*1*(e2)$) node{$y^{\phantom{1}}\!\!$};
\draw ($2*2*(e1)$) node{$x^2$};
\draw ($2*2*(e2)$) node{$y^2$};
\draw ($2*3*(e1)$) node{$x^3$};
\draw ($2*1*(e1)+2*1*(e2)$) node{$xy^{\phantom{1}}\!\!$};
\draw ($2*2*(e1)+2*1*(e2)$) node{$x^2y$};
\draw ($2*1*(e1)+2*2*(e2)$) node{$xy^2$};
\draw ($2*3*(e1)+2*1*(e2)$) node{$x^3y$};
\end{tikzpicture}
\caption{The Young diagram in Example~\ref{YD1ex}.}
\label{YD1fig}
\end{figure}
\end{example}

\begin{construction}
As it is shown in Example~\ref{YD1ex}, the algebra $A_{\Lambda,B}$ still may not be Gorenstein for some data $(\Lambda, B)$. Let $J$ be the maximal (with respect to inclusion) ideal of~$A_{\Lambda,B}$ contained in~$U_{\Lambda,B}$. By Proposition~\ref{prop_reduct}, the pair $(\widetilde{A_{\Lambda,B}}, \widetilde{U_{\Lambda,B}}) = (A_{\Lambda,B}/J, U_{\Lambda,B}/J)$ is an $H$-pair, and the hypersurface that corresponds to $(A_{\Lambda,B}, U_{\Lambda,B})$ is the projective cone over a non-degenerate hypersurface that corresponds to $(\widetilde{A_{\Lambda,B}}, \widetilde{U_{\Lambda,B}})$. According to Theorem~\ref{tgor}, the algebra $\widetilde{A_{\Lambda,B}}$ is Gorenstein.

\begin{definition}
The \emph{reduced $H$-pair associated with $(\Lambda, B)$} is the above $H$-pair $(\widetilde{A_{\Lambda,B}}, \widetilde{U_{\Lambda,B}})$.
\end{definition}
\end{construction}

Notice that the $H$-pairs that appear in the proof of Theorem~\ref{thb} are the algebras $\widetilde{A_{\Lambda,B}}$, where $\Lambda = \{\lambda \mid |\lambda| \le d\}$.

We denote the coordinates of an element $x$ in the algebra $A_{\Lambda,B}$ of dimension $n+1$ as follows:
\begin{equation}
\label{eqYDcoord}
x = \sum\limits_{\lambda \in \Lambda \setminus \corn(\Lambda)} z_\lambda x^\lambda + z_{\corn} x^{\corn} \in A_{\Lambda,B}.
\end{equation}
For shortness, we also define $z_0 = z_{(0, \ldots, 0)}$, $z_i = z_\lambda$ for $\lambda = e_i := (0, \ldots, \underset{i}{1}, \ldots, 0)$, $1 \le i \le k$, and let $z^\lambda = z_1^{\lambda_1}\ldots z_k^{\lambda_k}$ for $\lambda = (\lambda_1, \ldots,\lambda_k)$. The same notation is used for homogeneous coordinates in the projective space $\PP^n = \PP(A_{\Lambda,B})$.

\begin{remark}
\label{precorn_rem}
Let us discuss the conditions for the algebra $A_{\Lambda,B}$ to be Gorenstein. We call a non-corner cell $\lambda \in \Lambda$ a \emph{precorner cell} if $\lambda + e_i \notin \Lambda$ or $\lambda + e_i \in \corn(\Lambda)$ for any $1 \le i \le k$. The set of precorner cells is denoted by $\precorn(\Lambda)$.
Consider an element $x$ as in equation~\eqref{eqYDcoord}. Then $x \in \Soc A_{\Lambda,B}$ if and only if $xx_i = 0$ for any $1 \le i \le k$. Notice that
\begin{align*}
xx_i = \!\!\sum_{\lambda \in \Lambda \setminus \corn(\Lambda)}\!\!\! z_\lambda x^{\lambda+e_i} &=
\!\!\sum_{\substack{\lambda \in \Lambda\setminus \corn(\Lambda)\\\lambda + e_i \in \Lambda \setminus \corn(\Lambda)}}\!\!\! z_\lambda x^{\lambda+e_i} +
\!\!\sum_{\substack{\lambda \in \Lambda\setminus \corn(\Lambda)\\\lambda + e_i \in \corn(\Lambda)}}\!\!\! z_\lambda x^{\lambda+e_i} = \\ &=
\!\!\sum_{\substack{\lambda \in \Lambda\setminus \corn(\Lambda)\\\lambda + e_i \in \Lambda \setminus \corn(\Lambda)}}\!\!\! z_\lambda x^{\lambda+e_i} +
\Bigl(\!\sum_{\substack{\lambda \in \Lambda\setminus \corn(\Lambda)\\\lambda + e_i \in \corn(\Lambda)}}\!\!\! b_{\lambda + e_i} z_\lambda\Bigr) x^{\corn}.
\end{align*}
It follows that $x \in \Soc A_{\Lambda,B}$ if and only if
\begin{align}
z_\lambda &= 0 \quad \text{ for any } \lambda \in \Lambda \setminus (\corn(\Lambda) \cup \precorn(\Lambda)); \label{eqYDprecorn1} \\
\sum_{\substack{\lambda \in \precorn(\Lambda)\\\lambda + e_i \in \corn(\Lambda)}} b_{\lambda + e_i} z_\lambda &= 0 \quad \text{ for any } 1 \le i \le k. \label{eqYDprecorn2}
\end{align}
Since $\langle x^{\corn}\rangle \subseteq \Soc A_{\Lambda,B}$, the algebra $A_{\Lambda,B}$ is Gorenstein if and only if the system of linear equations~\eqref{eqYDprecorn1},~\eqref{eqYDprecorn2} has the only solution $z_\lambda = 0$, where $\lambda \in \Lambda \setminus \corn(\Lambda)$. Equivalently, the system of linear equations~\eqref{eqYDprecorn2} has the only solution $z_\lambda = 0$, where $\lambda \in \precorn(\Lambda)$.
\end{remark}

\begin{example}
In Example~\ref{YD1ex}, there are three precorner cells $(3,0)$, $(2,1)$ and $(0,2)$. System~\eqref{eqYDprecorn2} turns into $b_{(3,1)}z_{(2,1)} + b_{(1,2)}z_{(0,2)} = 0; \; b_{(3,1)}z_{(3,0)} = 0$; it has solutions $(z_{(3,0)}, z_{(2,1)}, z_{(0,2)}) = \alpha(0,1,-1)$, $\alpha \in \KK$. So, besides $xy^2 = x^3y$, $\Soc A$ also contains the element $(x^2y-y^2)$ and $A_{\Lambda, B}$ is not Gorenstein.
\end{example}

If $A_{\Lambda,B}$ is Gorenstein, or, equivalently, system~\eqref{eqYDprecorn2} has the only zero solution, then the number of equations is at least the number of variables. So, the necessary condition for $A_{\Lambda,B}$ to be Gorenstein is the inequality $|\precorn(\Lambda)| \le k$. It is a natural task to classify all diagrams $\Lambda$ of dimension $k$ such that $|\precorn(\Lambda)| \le k$. In the following example, we do this for $k = 2$.

\begin{example}
\label{YD2ex}
Given a 2-dimensional Young diagram $\Lambda$, we can order its corner cells from top left to bottom right. For any two consecutive corner cells, consider the hook joining them. It is easy to see that this hook contains at least one precorner cell. So, if $|\precorn(\Lambda)| \leqslant 2$, then $\Lambda$ cannot contain more than three corner cells. Consider the three possible cases separately; see Figure~\ref{YD2fig}.

1) Suppose that $\Lambda$ contains only one corner cell. Then, $\Lambda$ is a rectangle, which clearly has exactly two precorner cells (unless the rectangle is one-dimensional); see Figure~\ref{YD2fig}a).

2) Suppose that $\Lambda$ contains two corner cells. Note that the left adjacent cell of the top left corner (if it exists) should be a precorner cell. Similarly, the bottom adjacent cell of the bottom right corner (again, if it exists) should also be a precorner cell. So, the diagram $\Lambda$ cannot have both such cells (otherwise, $\Lambda$ has at least three precorner cells). Moreover, if both the height and the length of the hook joining the two corner cells are greater than two, then the hook has at least two precorner cells. Taking all this into account, we conclude that the Young diagrams with two corner cells and not more than two precorner cells are of the following types; see Figure~\ref{YD2fig}, b)--f). 

3) Suppose that $\Lambda$ contains three corner cells. In this case, the two precorner cells of $\Lambda$ should lie in the two hooks between the corner cells (one precorner cell in each hook). So, there are no cells left adjacent to the top left corner and bottom adjacent to the bottom right corner; also, the length or height of each hook is exactly 2. This implies that $\Lambda$ should be of one of the following types; see Figure~\ref{YD2fig}, g)--j).

We conclude that all $2$-dimensional Young diagrams with not more than $2$ corner cells are given in Figure~\ref{YD2fig}, a)-j).

\begin{figure}[h]
\begin{tikzpicture}[x=0.75pt,y=0.75pt,yscale=-1,xscale=1]
\def\d{10}
\coordinate (e1) at (\d,0);
\coordinate (e2) at (0,-\d);
\coordinate (O) at (0,0);
\draw ($(O)-2*(e1)$) node{a)};
\draw ($(O)$) + (-\d,\d) rectangle ++(2*5*\d-\d,-2*4*\d+\d);
\foreach \x/\y in {4/2, 3/3}
    \filldraw[precornfill] ($(O)+2*\x*(e1)+2*\y*(e2)$) + (-\d,-\d) rectangle ++(\d,\d);
\foreach \x/\y in {4/3}
    \filldraw[cornfill] ($(O)+2*\x*(e1)+2*\y*(e2)$) + (-\d,-\d) rectangle ++(\d,\d);
\coordinate (O) at ($(O)+14*(e1)$);
\draw ($(O)-2*(e1)$) node{b)};
\draw ($(O)$) + (-\d,\d) rectangle ++(2*5*\d-\d,-2*4*\d+\d);
\foreach \x/\y in {4/2, 3/3}
    \filldraw[precornfill] ($(O)+2*\x*(e1)+2*\y*(e2)$) + (-\d,-\d) rectangle ++(\d,\d);
\foreach \x/\y in {4/3, 0/4}
    \filldraw[cornfill] ($(O)+2*\x*(e1)+2*\y*(e2)$) + (-\d,-\d) rectangle ++(\d,\d);
\coordinate (O) at ($(O)+14*(e1)$);
\draw ($(O)-2*(e1)$) node{c)};
\draw ($(O)$) + (-\d,\d) rectangle ++(2*5*\d-\d,-2*4*\d+\d);
\foreach \x/\y in {4/2, 3/3}
    \filldraw[precornfill] ($(O)+2*\x*(e1)+2*\y*(e2)$) + (-\d,-\d) rectangle ++(\d,\d);
\foreach \x/\y in {4/3, 5/0}
    \filldraw[cornfill] ($(O)+2*\x*(e1)+2*\y*(e2)$) + (-\d,-\d) rectangle ++(\d,\d);
\coordinate (O) at ($(O)+16*(e1)$);
\draw ($(O)-2*(e1)$) node{d)};
\draw ($(O)+ (-\d,\d)$) -- ++($2*6*(e1)$) -- ++($2*(e2)$) -- ++($-2*5*(e1)$) -- ++($2*4*(e2)$) -- ++($-2*(e1)$) -- cycle;
\foreach \x/\y in {4/0, 0/3}
    \filldraw[precornfill] ($(O)+2*\x*(e1)+2*\y*(e2)$) + (-\d,-\d) rectangle ++(\d,\d);
\foreach \x/\y in {5/0, 0/4}
    \filldraw[cornfill] ($(O)+2*\x*(e1)+2*\y*(e2)$) + (-\d,-\d) rectangle ++(\d,\d);
\coordinate (O) at ($-16*(e2)$);
\draw ($(O)-2*(e1)$) node{e)};
\draw ($(O)+ (-\d,\d)$) -- ++($2*7*(e1)$) -- ++($2*(e2)$) -- ++($-2*3*(e1)$) -- ++($2*(e2)$) -- ++($-2*4*(e1)$) -- cycle;
\foreach \x/\y in {5/0, 2/1}
    \filldraw[precornfill] ($(O)+2*\x*(e1)+2*\y*(e2)$) + (-\d,-\d) rectangle ++(\d,\d);
\foreach \x/\y in {6/0, 3/1}
    \filldraw[cornfill] ($(O)+2*\x*(e1)+2*\y*(e2)$) + (-\d,-\d) rectangle ++(\d,\d);
\coordinate (O) at ($(O)+18*(e1)$);
\draw ($(O)-2*(e1)$) node{e')};
\draw ($(O)+ (-\d,\d)$) -- ++($2*7*(e1)$) -- ++($2*(e2)$) -- ++($-2*6*(e1)$) -- ++($2*(e2)$) -- ++($-2*1*(e1)$) -- cycle;
\foreach \x/\y in {5/0}
    \filldraw[precornfill] ($(O)+2*\x*(e1)+2*\y*(e2)$) + (-\d,-\d) rectangle ++(\d,\d);
\foreach \x/\y in {6/0, 0/1}
    \filldraw[cornfill] ($(O)+2*\x*(e1)+2*\y*(e2)$) + (-\d,-\d) rectangle ++(\d,\d);
\coordinate (O) at ($(O)+18*(e1)$);
\draw ($(O)-2*(e1)$) node{f)};
\draw ($(O)+ (-\d,\d)$) -- ++($2*7*(e2)$) -- ++($2*(e1)$) -- ++($-2*3*(e2)$) -- ++($2*(e1)$) -- ++($-2*4*(e2)$) -- cycle;
\foreach \x/\y in {0/5, 1/2}
    \filldraw[precornfill] ($(O)+2*\x*(e1)+2*\y*(e2)$) + (-\d,-\d) rectangle ++(\d,\d);
\foreach \x/\y in {0/6, 1/3}
    \filldraw[cornfill] ($(O)+2*\x*(e1)+2*\y*(e2)$) + (-\d,-\d) rectangle ++(\d,\d);
\coordinate (O) at ($(O)+8*(e1)$);
\draw ($(O)-2*(e1)$) node{f')};
\draw ($(O)+ (-\d,\d)$) -- ++($2*7*(e2)$) -- ++($2*(e1)$) -- ++($-2*6*(e2)$) -- ++($2*(e1)$) -- ++($-2*1*(e2)$) -- cycle;
\foreach \x/\y in {0/5}
    \filldraw[precornfill] ($(O)+2*\x*(e1)+2*\y*(e2)$) + (-\d,-\d) rectangle ++(\d,\d);
\foreach \x/\y in {0/6, 1/0}
    \filldraw[cornfill] ($(O)+2*\x*(e1)+2*\y*(e2)$) + (-\d,-\d) rectangle ++(\d,\d);
\coordinate (O) at ($-32*(e2)$);
\draw ($(O)-2*(e1)$) node{g)};
\draw ($(O)+ (-\d,\d)$) -- ++($2*6*(e1)$) -- ++($2*(e2)$) -- ++($-2*5*(e1)$) -- ++($2*4*(e2)$) -- ++($-2*(e1)$) -- cycle;
\foreach \x/\y in {4/0, 0/3}
    \filldraw[precornfill] ($(O)+2*\x*(e1)+2*\y*(e2)$) + (-\d,-\d) rectangle ++(\d,\d);
\foreach \x/\y in {5/0, 0/4, 1/1}
    \filldraw[cornfill] ($(O)+2*\x*(e1)+2*\y*(e2)$) + (-\d,-\d) rectangle ++(\d,\d);
\coordinate (O) at ($(O)+16*(e1)$);
\draw ($(O)-2*(e1)$) node{h)};
\draw ($(O)$) + (-\d,\d) rectangle ++(2*5*\d-\d,-2*4*\d+\d);
\foreach \x/\y in {4/2, 3/3}
    \filldraw[precornfill] ($(O)+2*\x*(e1)+2*\y*(e2)$) + (-\d,-\d) rectangle ++(\d,\d);
\foreach \x/\y in {4/3, 5/0, 0/4}
    \filldraw[cornfill] ($(O)+2*\x*(e1)+2*\y*(e2)$) + (-\d,-\d) rectangle ++(\d,\d);
\coordinate (O) at ($(O)+16*(e1)$);
\draw ($(O)-2*(e1)$) node{i)};
\draw ($(O)+ (-\d,\d)$) -- ++($2*7*(e1)$) -- ++($2*(e2)$) -- ++($-2*3*(e1)$) -- ++($2*(e2)$) -- ++($-2*4*(e1)$) -- cycle;
\foreach \x/\y in {5/0, 2/1}
    \filldraw[precornfill] ($(O)+2*\x*(e1)+2*\y*(e2)$) + (-\d,-\d) rectangle ++(\d,\d);
\foreach \x/\y in {6/0, 3/1, 0/2}
    \filldraw[cornfill] ($(O)+2*\x*(e1)+2*\y*(e2)$) + (-\d,-\d) rectangle ++(\d,\d);
\coordinate (O) at ($(O)+18*(e1)$);
\draw ($(O)-2*(e1)$) node{j)};
\draw ($(O)+ (-\d,\d)$) -- ++($2*7*(e2)$) -- ++($2*(e1)$) -- ++($-2*3*(e2)$) -- ++($2*(e1)$) -- ++($-2*4*(e2)$) -- cycle;
\foreach \x/\y in {0/5, 1/2}
    \filldraw[precornfill] ($(O)+2*\x*(e1)+2*\y*(e2)$) + (-\d,-\d) rectangle ++(\d,\d);
\foreach \x/\y in {0/6, 1/3, 2/0}
    \filldraw[cornfill] ($(O)+2*\x*(e1)+2*\y*(e2)$) + (-\d,-\d) rectangle ++(\d,\d);

\end{tikzpicture}
\caption{Young diagrams in Example~\ref{YD2ex}.}
\label{YD2fig}
\end{figure}
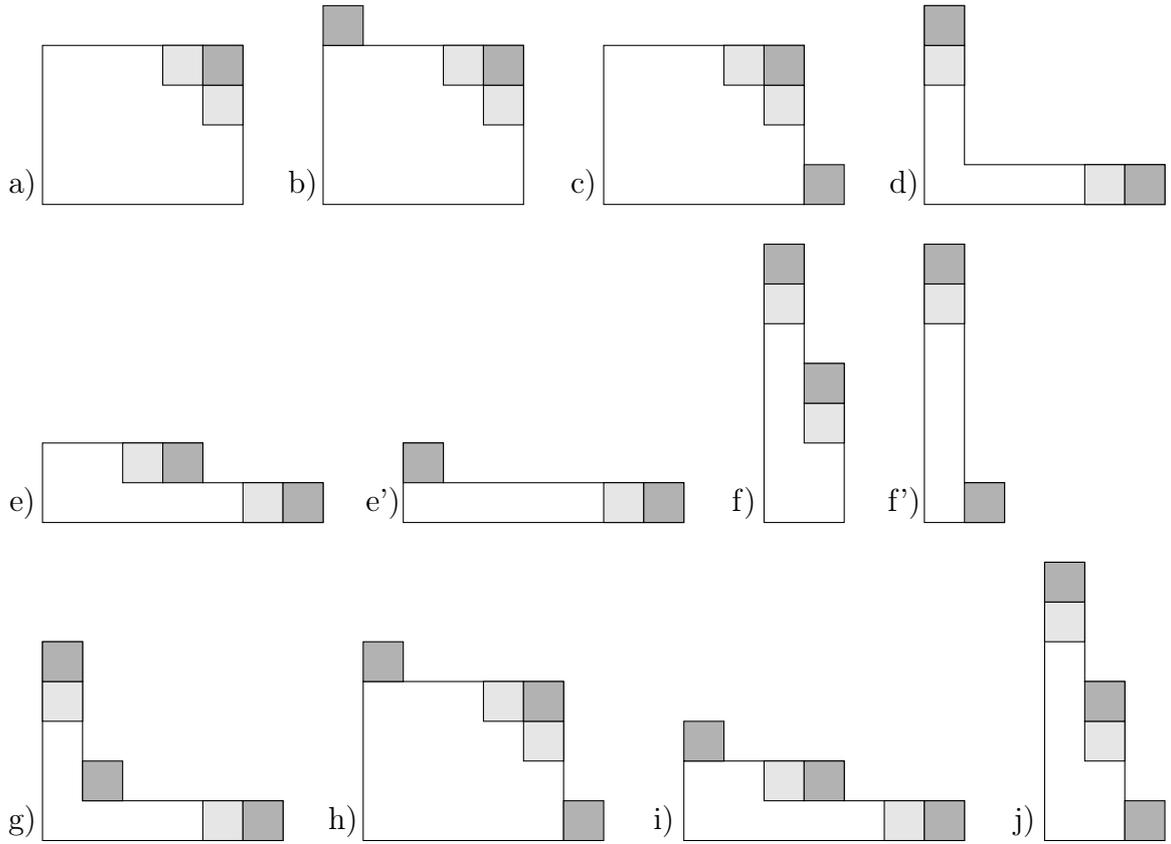
\end{example}

Example~\ref{YD2ex} shows that $|\precorn(\Lambda)| \ge 2$ for any non-exceptional two-dimensional Young diagram (the only Young diagrams with one precorner cell shown in Figure~\ref{YD2fig},~e'),~f') are exceptional). 

\begin{proposition}
\label{precorn_prop}
If $\Lambda \subseteq \ZZ^k$ is a non-exceptional $k$-dimensional Young diagram, then $|\precorn(\Lambda)| \ge k$.
\end{proposition}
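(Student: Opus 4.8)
The plan is to reformulate the statement in terms of the Young diagram $\Lambda' := \Lambda \setminus \corn(\Lambda)$. First I would check that $\Lambda'$ is again a Young diagram: it is a lower set, being obtained from the lower set $\Lambda$ by deleting its maximal cells, and in fact a cell $\lambda$ is a precorner of $\Lambda$ if and only if it is a \emph{corner} of $\Lambda'$ --- this is immediate from the definitions, since in a Young diagram a cell is maximal exactly when none of its upper neighbours $\lambda + e_i$ belongs to the diagram. I would also note that $\Lambda$ is non-exceptional if and only if $e_1, \dots, e_k \in \Lambda'$: each $e_i$ lies in $\Lambda$ by the standing assumption on $\Lambda$, and $e_i$ fails to be a maximal cell of $\Lambda$ for every $i$ precisely when $\Lambda$ is non-exceptional. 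Thus $\Lambda'$ is $k$-dimensional, and the assertion to prove becomes $|\corn(\Lambda')| \ge k$.

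The crucial structural input is the way $\Lambda'$ sits inside $\Lambda$. Since $\Lambda = \Lambda' \sqcup \corn(\Lambda)$ and $\corn(\Lambda)$ is exactly the set of maximal cells of $\Lambda$, the set $\corn(\Lambda)$ is an antichain of minimal elements of $\Zgezero^k \setminus \Lambda'$, and every corner of $\Lambda'$ is strictly dominated by some element of $\corn(\Lambda)$ (otherwise it would already be maximal in $\Lambda$). A short argument shows that a minimal element $\mu$ of $\Zgezero^k \setminus \Lambda'$ which dominates a corner $\nu$ of $\Lambda'$ must equal $\nu + e_i$ for a \emph{single} index $i$: the set of coordinates in which $\mu$ exceeds $\nu$ must be a singleton, since for each such coordinate $l$ the cell $\mu - e_l$ lies in $\Lambda'$ and satisfies $\nu \lecur \mu - e_l$, forcing $\mu - e_l = \nu$ by maximality of $\nu$. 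Hence $\Lambda'$ enjoys the following property $(\ast)$: for every corner $\nu$ of $\Lambda'$ there is an index $i$ with $\nu + e_i$ minimal in $\Zgezero^k\setminus\Lambda'$; equivalently, $\nu - e_l + e_i \in \Lambda'$ for every $l$ in the support of $\nu$ different from $i$.

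Now set $m_i := \max\{\nu_i : \nu \in \Lambda'\}$; since $e_i \in \Lambda'$ we have $m_i \ge 1$, and since $\Lambda'$ is a lower set the cell $m_i e_i$ lies in $\Lambda'$, so pushing it up to a corner of $\Lambda'$ shows that $C_i := \{\nu \in \corn(\Lambda') : \nu_i = m_i\}$ is non-empty. I would then prove that the family $(C_i)_{i=1}^{k}$ satisfies Hall's condition, so that the marriage theorem produces pairwise distinct corners $\nu^{(1)}, \dots, \nu^{(k)}$ of $\Lambda'$ with $\nu^{(i)} \in C_i$, giving $|\precorn(\Lambda)| = |\corn(\Lambda')| \ge k$. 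The engine for verifying Hall's condition is the following consequence of $(\ast)$. Suppose a corner $\nu$ lies in $C_i \cap C_j$ with $i \ne j$, and let $p$ be a good direction for $\nu$ as in $(\ast)$. Then $p \notin \{i, j\}$: if, say, $p = i$, then $(\ast)$ gives $\nu - e_j + e_i \in \Lambda'$, a cell with $i$-th coordinate $m_i + 1$, contradicting the maximality of $m_i$. With $p \notin \{i, j\}$, property $(\ast)$ gives $\nu - e_j + e_p \in \Lambda'$; this cell still has $i$-th coordinate $m_i$, so pushing it up to a corner $\rho$ of $\Lambda'$ yields $\rho \in C_i$, and $\rho \ne \nu$ because $\rho = \nu$ would give $\nu - e_j + e_p \lecur \nu$, i.e.\ $e_p \lecur e_j$, which is false since $p \ne j$. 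In particular, if $|C_i| = 1$ then $C_i$ is disjoint from every other $C_j$; this settles Hall's inequality for all subsets $S$ of size at most two, and, after splitting off the indices $i$ whose $C_i$ is disjoint from all the others (each such $C_i$ contributing a private corner), reduces the problem to the case where every $C_i$ has at least two elements.

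The main obstacle is completing Hall's condition in this remaining case: a single corner of $\Lambda'$ may attain several of the maxima $m_i$ at once (small instances already occur in Example~\ref{YD2ex}), and one must iterate the box-moving step of the previous paragraph --- each use of $(\ast)$ trading a coordinate sitting at its maximum for a strictly larger value in the good direction --- and invoke the finiteness of $\Lambda'$ to rule out a deficient set $S$. Setting up this iteration so that it is monotone and terminates (i.e.\ choosing, at each step, which corner to push up and which maximal coordinate to move) is the technical heart of the proof; the remaining steps above are routine.
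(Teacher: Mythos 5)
Your reduction is sound as far as it goes: the identification $\precorn(\Lambda)=\corn(\Lambda')$ for $\Lambda'=\Lambda\setminus\corn(\Lambda)$, the equivalence of non-exceptionality with $e_1,\dots,e_k\in\Lambda'$, the derivation of property $(\ast)$ (each corner $\nu$ of $\Lambda'$ has a direction $i$ with $\nu+e_i\in\corn(\Lambda)$, hence $\nu-e_l+e_i\in\Lambda'$ for all $l\ne i$ in the support of $\nu$), and the exchange lemma showing that $\nu\in C_i\cap C_j$ forces a second corner in $C_i$ and in $C_j$, are all correct. But the proof stops exactly where the real work begins: Hall's condition is verified only for subsets $S$ with $|S|\le 2$, and the promised ``monotone and terminating'' iteration for larger $S$ is never defined. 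You acknowledge this yourself, so as it stands this is a proof of a weaker statement (an SDR exists when every deficiency would come from pairs), not of the proposition.

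Moreover, the missing step is not mere bookkeeping: the one-step exchange lemma alone cannot close Hall's condition. For example, with $k=4$ the attainment pattern in which two corners realize the maxima $m_1,m_2,m_3$ and a third corner realizes $m_4$ satisfies every conclusion you have established (each coordinate whose $C_i$ has a shared corner is attained by at least two corners), yet it violates Hall for $S=\{1,2,3,4\}$ since three corners cover four sets. Ruling it out requires an extra structural input you never invoke, namely that distinct corners of $\Lambda'$ form an antichain (two corners agreeing in coordinates $1,2,3$ would be comparable through the fourth coordinate); similar interactions between $(\ast)$, the antichain property, and the maxima appear for every $|S|\ge 3$. So to complete your route you must formulate and prove a genuinely stronger combinatorial statement, which is the heart of the matter. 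For comparison, the paper avoids Hall-type arguments altogether: it proceeds by induction on $k$, passing to a non-exceptional bottom layer $\Lambda_{i,0}$ (found via connectivity of the graph on coordinates with edges $e_i+e_j\in\Lambda$), lifting its $k-1$ precorner cells through vertical columns, and producing one extra precorner cell from the first layer $\Lambda_{i,1}$; you may find it instructive to compare, since that induction delivers the bound without needing any matching theory.
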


\begin{proof}

Let us reformulate the notion of exceptional or non-exceptional diagrams in combinatorial terms. Consider an undirected graph with loops $G_\Lambda$ with the vertex set $V = \{1, \ldots, k\}$ and the edge set $E = \{(i,j) \mid e_i+e_j \in \Lambda\} \subseteq V \times V$. By definition, the diagram $\Lambda$ is exceptional with respect to the $i$-th coordinate if and only if the vertex~$i$ has degree~$0$. Indeed, $e_i$ is the unique cell with $\lambda_i \ne 0$ if and only if there is no $e_i+e_j \in \Lambda$ (including the case $j = i$).

Let us prove the proposition by induction on $k$.
%
As above, we denote coordinates in $\ZZ^k$ by $\lambda_j$, $1 \le j \le k$.

If the graph $G_\Lambda$ is disconnected, then $\Lambda$ is a union of two Young diagrams $\Lambda_1, \Lambda_2$ of dimensions $k_1, k_2$ with $k_1+k_2 = k$ and $\Lambda_1 \cap \Lambda_2 = 0$. Then \[|\precorn(\Lambda)| = |\precorn(\Lambda_1) \cup \precorn(\Lambda_2)| \ge k_1+k_2 = k\] by induction hypothesis. So, we can assume that $G_\Lambda$ is connected.

Notice that any layer $\Lambda_{j,c} = \Lambda \cap \{\lambda_j = c\}$, $c \in \Zgezero$, is a $(k-1)$-dimensional Young diagram. First let us show that there exists $1 \le i \le k$ such that the bottom layer $\Lambda_{i,0}$ is non-exceptional. Indeed, the graph of $\Lambda_{i,0}$ is obtained from $G_\Lambda$ by removing the vertex~$i$. Since $G_\Lambda$ is connected, one can remove a vertex in such a way that the result remains a connected graph; in particular, any vertex remains of degree at least one. The exception for this argument is the graph with two vertices, one edge and no loops, i.e. the two-dimensional Young diagram with 4 cells in a square, which has two precorner cells as required.

For any precorner cell of a <<horizontal>> layer $\lambda \in \Lambda_{i,c}$, the <<vertical>> column $(\lambda + \ZZ e_i) \cap \Lambda$ contains at least one precorner cell of $\Lambda$. Indeed, either the highest cell in this column or the cell under it is a precorner cell of $\Lambda$. Since $\Lambda_{i,0}$ is non-exceptional, there exists at least $k-1$ precorner cells of~$\Lambda_{i,0}$ by induction hypothesis, and thus there is at least $k-1$ precorner cells of~$\Lambda$ in their vertical columns. It remains to prove that there is at least one more.

If there is a corner cell of the <<bottom>> layer $\mu \in \Lambda_{i,0}$ that is not unique in its column, then this column contains at least one precorner cell of~$\Lambda$. Namely, it is the cell under the highest cell of this column. So let any corner cell of the <<bottom>> layer be unique in its column. Then precorner cells of the first <<horizontal>> layer $\Lambda_{i,1}$ and precorner cells of the bottom <<horizontal>> layer $\Lambda_{i,0}$ are in different <<vertical>> columns. If there is at least one precorner cell in~$\Lambda_{i,1}$, then it follows that there is at least one more precorner cell of~$\Lambda$ in its column. Otherwise, $\Lambda_{i,1}$ has a unique cell, whence $\Lambda$ is exceptional with respect to the $i$-th coordinate; a contradiction.
%
%
%
\end{proof}

\begin{example}
All local algebras of dimension at most~6 are listed in~\cite[Table~1]{AZa}. The Gorenstein algebras among them are the algebras No.~1--3, 5-6, 9-10, 14, 18-21, 30, 38. These algebras are listed in Table 1 below. It turns out that they have the form $A_{\Lambda,B}$; see Figure~\ref{YDtablefig}. Here No.~38 is a Young diagram in $\Zgezero^4$ consisting of $0, e_i, 2e_i$, $1 \le i \le 4$. In all the cases, system~\eqref{eqYDprecorn2} consists of equations of the form $z_\lambda = 0$, $\lambda \in \precorn(\Lambda)$, and has the only zero solution.

\begin{center}
\begin{tabular}{ |c|c|c| }
 \hline
 № & Algebra & Dimension\\
 \hline\hline
 1& $\KK$ & $1$\\
 \hline
 2& $\KK[x_1]/(x_1^2)$ & 2\\
 \hline
 3& $\KK[x_1]/(x_1^3)$ & 3\\
 \hline
 5& $\KK[x_1]/(x_1^4)$ & 4\\
 \hline
 6& $\KK[x_1,x_2]/(x_1x_2, x_1^2-x_2^2)$ & 4\\
 \hline
 9& $\KK[x_1]/(x_1^5)$ & 5\\
 \hline
 10& $\KK[x_1,x_2]/(x_1x_2,x_1^3-x_2^2)$ & 5\\
 \hline
 14& $\KK[x_1,x_2,x_3]/(x_1x_2,x_1x_3,x_2x_3,x_1^2-x_2^2,x_1^2-x_3^2)$ & 5\\
 \hline
 18 & $\KK[x_1]/(x_1^6)$ & 6\\
 \hline
 19 & $\KK[x_1,x_2]/(x_1x_2,x_1^4-x_2^2)$ & 6\\
 \hline
 20 & $\KK[x_1,x_2]/(x_1x_2,x_1^3-x_2^3)$ & 6\\
 \hline
 21& $\KK[x_1,x_2]/(x_1^3,x_2^2)$ & 6\\
 \hline
 30& $\KK[x_1,x_2,x_3]/(x_1^2,x_2^2,x_1x_3,x_2x_3,x_1x_2-x_3^3)$ & 6\\
 \hline
 38& $\KK[x_1,x_2,x_3,x_4]/(x_i^2-x_j^2, x_ix_j, i\ne j)$ & 6\\
 \hline
\end{tabular}

\vspace{0.3cm}
Table 1: Local Gorenstein algebras of dimensions up to $6$
\end{center}

\begin{figure}[h]
\begin{tikzpicture}[x=0.75pt,y=0.75pt,yscale=-1,xscale=1]
\def\d{10}
\coordinate (e1) at (\d,0);
\coordinate (e2) at (0,-\d);
\coordinate (e3) at (-\d*0.4,\d*0.6);
\coordinate (O) at (0,0);
\draw ($(O)-2*(e2)$) node{No.~1};
\filldraw[cornfill] ($(O)+2*0*(e1)+2*0*(e2)$) + (-\d,-\d) rectangle ++(\d,\d);
\coordinate (O) at ($(O)+6*(e1)$);
\draw ($(O)-2*(e2)$) node{No.~2};
\filldraw[precornfill] ($(O)+2*0*(e1)+2*0*(e2)$) + (-\d,-\d) rectangle ++(\d,\d);
\filldraw[cornfill] ($(O)+2*1*(e1)+2*0*(e2)$) + (-\d,-\d) rectangle ++(\d,\d);
\coordinate (O) at ($(O)+8*(e1)$);
\draw ($(O)-2*(e2)$) node{No.~3};
\draw ($(O)+2*0*(e1)+2*0*(e2)$) + (-\d,-\d) rectangle ++(\d,\d);
\filldraw[precornfill] ($(O)+2*1*(e1)+2*0*(e2)$) + (-\d,-\d) rectangle ++(\d,\d);
\filldraw[cornfill] ($(O)+2*2*(e1)+2*0*(e2)$) + (-\d,-\d) rectangle ++(\d,\d);
\coordinate (O) at ($(O)+10*(e1)$);
\draw ($(O)-2*(e2)$) node{No.~5};
\foreach \x in {0,1}
    \draw ($(O)+2*\x*(e1)+2*0*(e2)$) + (-\d,-\d) rectangle ++(\d,\d);
\filldraw[precornfill] ($(O)+2*2*(e1)+2*0*(e2)$) + (-\d,-\d) rectangle ++(\d,\d);
\filldraw[cornfill] ($(O)+2*3*(e1)+2*0*(e2)$) + (-\d,-\d) rectangle ++(\d,\d);
\coordinate (O) at ($(O)+12*(e1)$);
\draw ($(O)-2*(e2)$) node{No.~6};
\draw ($(O)+2*0*(e1)+2*0*(e2)$) + (-\d,-\d) rectangle ++(\d,\d);
\foreach \x/\y in {0/1, 1/0}
    \filldraw[precornfill] ($(O)+2*\x*(e1)+2*\y*(e2)$) + (-\d,-\d) rectangle ++(\d,\d);
\foreach \x/\y in {0/2, 2/0}
    \filldraw[cornfill] ($(O)+2*\x*(e1)+2*\y*(e2)$) + (-\d,-\d) rectangle ++(\d,\d);
\coordinate (O) at ($(O)+10*(e1)$);
\draw ($(O)-2*(e2)$) node{No.~9};
\foreach \x in {0,1,2}
    \draw ($(O)+2*\x*(e1)+2*0*(e2)$) + (-\d,-\d) rectangle ++(\d,\d);
\filldraw[precornfill] ($(O)+2*2*(e1)+2*0*(e2)$) + (-\d,-\d) rectangle ++(\d,\d);
\filldraw[cornfill] ($(O)+2*3*(e1)+2*0*(e2)$) + (-\d,-\d) rectangle ++(\d,\d);
\coordinate (O) at ($-12*(e2)$);
\draw ($(O)-2*(e2)$) node{No.~10};
\foreach \x in {0,1}
    \draw ($(O)+2*\x*(e1)+2*0*(e2)$) + (-\d,-\d) rectangle ++(\d,\d);
\foreach \x/\y in {0/1, 2/0}
    \filldraw[precornfill] ($(O)+2*\x*(e1)+2*\y*(e2)$) + (-\d,-\d) rectangle ++(\d,\d);
\foreach \x/\y in {0/2, 3/0}
    \filldraw[cornfill] ($(O)+2*\x*(e1)+2*\y*(e2)$) + (-\d,-\d) rectangle ++(\d,\d);
\coordinate (O) at ($(O)+16*(e1)+2*(e2)$);
\draw ($(O)-4*(e2)+2*(e1)$) node{No.~14};
\foreach \x/\y/\z in {0/-1/0,-1/0/0}
    \filldraw[precornfill] ($(O)+2*\x*(e1)+2*\y*(e2)+2*\z*(e3)$) + (-\d,-\d) rectangle ++(\d,\d);
\foreach \x/\y/\z in {0/0/0,-1/0/1}
    \filldraw[fill=black!6, draw = black] ($(O)+(-\d,\d)+2*\x*(e1)+2*\y*(e2)+2*\z*(e3)$) -- ++($2*(e1)$) -- ++($-2*(e3)$) -- ++($-2*(e1)$) -- cycle;
\foreach \x/\y/\z in {0/0/0,0/-1/1}
    \filldraw[fill=black!17, draw = black] ($(O)+(-\d,\d)+2*\x*(e1)+2*\y*(e2)+2*\z*(e3)$) -- ++($2*(e2)$) -- ++($-2*(e3)$) -- ++($-2*(e2)$) -- cycle;
\foreach \x/\y/\z in {1/-1/0,-1/1/0,-1/-1/2}
    \filldraw[cornfill] ($(O)+2*\x*(e1)+2*\y*(e2)+2*\z*(e3)$) + (-\d,-\d) rectangle ++(\d,\d);
\foreach \x/\y/\z in {1/0/0,-1/0/2,-1/2/0}
    \filldraw[fill=black!25, draw = black] ($(O)+(-\d,\d)+2*\x*(e1)+2*\y*(e2)+2*\z*(e3)$) -- ++($2*(e1)$) -- ++($-2*(e3)$) -- ++($-2*(e1)$) -- cycle;
\foreach \x/\y/\z in {0/1/0,0/-1/2,2/-1/0}
    \filldraw[fill=black!40, draw = black] ($(O)+(-\d,\d)+2*\x*(e1)+2*\y*(e2)+2*\z*(e3)$) -- ++($2*(e2)$) -- ++($-2*(e3)$) -- ++($-2*(e2)$) -- cycle;
\coordinate (O) at ($(O)+10*(e1)-2*(e2)$);
\draw ($(O)-2*(e2)$) node{No.~18};
\foreach \x in {0,1,2,3}
    \draw ($(O)+2*\x*(e1)+2*0*(e2)$) + (-\d,-\d) rectangle ++(\d,\d);
\filldraw[precornfill] ($(O)+2*3*(e1)+2*0*(e2)$) + (-\d,-\d) rectangle ++(\d,\d);
\filldraw[cornfill] ($(O)+2*4*(e1)+2*0*(e2)$) + (-\d,-\d) rectangle ++(\d,\d);
\coordinate (O) at ($(O)+14*(e1)$);
\draw ($(O)-2*(e2)$) node{No.~19};
\foreach \x in {0,1,2}
    \draw ($(O)+2*\x*(e1)+2*0*(e2)$) + (-\d,-\d) rectangle ++(\d,\d);
\foreach \x/\y in {0/1, 3/0}
    \filldraw[precornfill] ($(O)+2*\x*(e1)+2*\y*(e2)$) + (-\d,-\d) rectangle ++(\d,\d);
\foreach \x/\y in {0/2, 4/0}
    \filldraw[cornfill] ($(O)+2*\x*(e1)+2*\y*(e2)$) + (-\d,-\d) rectangle ++(\d,\d);
\coordinate (O) at ($-28*(e2)$);
\draw ($(O)-2*(e2)$) node{No.~20};
\foreach \x/\y in {0/0, 0/1, 1/0}
    \draw ($(O)+2*\x*(e1)+2*\y*(e2)$) + (-\d,-\d) rectangle ++(\d,\d);
\foreach \x/\y in {0/2, 2/0}
    \filldraw[precornfill] ($(O)+2*\x*(e1)+2*\y*(e2)$) + (-\d,-\d) rectangle ++(\d,\d);
\foreach \x/\y in {0/3, 3/0}
    \filldraw[cornfill] ($(O)+2*\x*(e1)+2*\y*(e2)$) + (-\d,-\d) rectangle ++(\d,\d);
\coordinate (O) at ($(O)+12*(e1)$);
\draw ($(O)-2*(e2)$) node{No.~21};
\foreach \x/\y in {0/0, 0/1, 1/0}
    \draw ($(O)+2*\x*(e1)+2*\y*(e2)$) + (-\d,-\d) rectangle ++(\d,\d);
\foreach \x/\y in {1/1, 2/0}
    \filldraw[precornfill] ($(O)+2*\x*(e1)+2*\y*(e2)$) + (-\d,-\d) rectangle ++(\d,\d);
\foreach \x/\y in {2/1}
    \filldraw[cornfill] ($(O)+2*\x*(e1)+2*\y*(e2)$) + (-\d,-\d) rectangle ++(\d,\d);
\coordinate (O) at ($(O)+14*(e1)+3.5*(e2)$);
\draw ($(O)-5.5*(e2)$) node{No.~30};
\draw ($(O)+2*-1*(e1)+2*0*(e2)+2*0*(e3)$) + (-\d,-\d) rectangle ++(\d,\d);
\draw ($(O)+(-\d,\d)$) -- ++($2*(e2)$) -- ++($-2*(e3)$) -- ++($-2*(e2)$) -- cycle;
\foreach \x/\y/\z in {-1/1/0,-1/-1/1}
    \filldraw[precornfill] ($(O)+2*\x*(e1)+2*\y*(e2)+2*\z*(e3)$) + (-\d,-\d) rectangle ++(\d,\d);
\foreach \x/\y/\z in {0/0/0,-1/0/1}
    \filldraw[fill=black!6, draw = black] ($(O)+(-\d,\d)+2*\x*(e1)+2*\y*(e2)+2*\z*(e3)$) -- ++($2*(e1)$) -- ++($-2*(e3)$) -- ++($-2*(e1)$) -- cycle;
\foreach \x/\y/\z in {0/1/0,1/-1/0}
    \filldraw[fill=black!17, draw = black] ($(O)+(-\d,\d)+2*\x*(e1)+2*\y*(e2)+2*\z*(e3)$) -- ++($2*(e2)$) -- ++($-2*(e3)$) -- ++($-2*(e2)$) -- cycle;
\foreach \x/\y/\z in {0/-1/1,-1/2/0}
    \filldraw[cornfill] ($(O)+2*\x*(e1)+2*\y*(e2)+2*\z*(e3)$) + (-\d,-\d) rectangle ++(\d,\d);
\foreach \x/\y/\z in {0/0/1,-1/3/0}
    \filldraw[fill=black!25, draw = black] ($(O)+(-\d,\d)+2*\x*(e1)+2*\y*(e2)+2*\z*(e3)$) -- ++($2*(e1)$) -- ++($-2*(e3)$) -- ++($-2*(e1)$) -- cycle;
\foreach \x/\y/\z in {0/2/0,1/-1/1}
    \filldraw[fill=black!40, draw = black] ($(O)+(-\d,\d)+2*\x*(e1)+2*\y*(e2)+2*\z*(e3)$) -- ++($2*(e2)$) -- ++($-2*(e3)$) -- ++($-2*(e2)$) -- cycle;
\coordinate (O) at ($(O)+14*(e1)$);
\draw ($(O)-5.5*(e2)$) node{No.~38};
\coordinate (e3) at (-\d*0.5,\d*0.5);
\coordinate (e4) at (\d*0.5,\d*0.5);
\filldraw[fill=black!6, draw = black] ($(O)+2*0*(e1)+2*0*(e2)+2*0*(e3)+2*0*(e4)$) -- ++($-2*(e1)$) -- ++($2*(e3)$) -- ++($2*(e1)$) -- cycle;
\filldraw[fill=black!6, draw = black] ($(O)+2*0*(e1)+2*0*(e2)+2*0*(e3)+2*0*(e4)$) -- ++($2*(e1)$) -- ++($2*(e4)$) -- ++($-2*(e1)$) -- cycle;
\filldraw[fill=black!18, draw = black] ($(O)+2*0*(e1)+2*0*(e2)+2*0*(e3)+2*0*(e4)$) -- ++($2*(e1)$) -- ++($-2*(e3)$) -- ++($-2*(e1)$) -- cycle;
\filldraw[fill=black!18, draw = black] ($(O)+2*0*(e1)+2*0*(e2)+2*0*(e3)+2*0*(e4)$) -- ++($-2*(e1)$) -- ++($-2*(e4)$) -- ++($2*(e1)$) -- cycle;
\filldraw[fill=black!25, draw = black] ($(O)+2*0*(e1)+2*0*(e2)+2*1*(e3)+2*0*(e4)$) -- ++($-2*(e1)$) -- ++($2*(e3)$) -- ++($2*(e1)$) -- cycle;
\filldraw[fill=black!25, draw = black] ($(O)+2*0*(e1)+2*0*(e2)+2*0*(e3)+2*1*(e4)$) -- ++($2*(e1)$) -- ++($2*(e4)$) -- ++($-2*(e1)$) -- cycle;
\filldraw[fill=black!40, draw = black] ($(O)+2*0*(e1)+2*0*(e2)+2*-1*(e3)+2*0*(e4)$) -- ++($2*(e1)$) -- ++($-2*(e3)$) -- ++($-2*(e1)$) -- cycle;
\filldraw[fill=black!40, draw = black] ($(O)+2*0*(e1)+2*0*(e2)+2*0*(e3)+2*-1*(e4)$) -- ++($-2*(e1)$) -- ++($-2*(e4)$) -- ++($2*(e1)$) -- cycle;
\filldraw[fill=black!15, draw = black] ($(O)+2*0*(e1)+2*0*(e2)+2*0*(e3)+2*0*(e4)$) -- ++($-2*(e2)$) -- ++($2*(e3)$) -- ++($2*(e2)$) -- cycle;
\filldraw[fill=black!11, draw = black] ($(O)+2*0*(e1)+2*0*(e2)+2*0*(e3)+2*0*(e4)$) -- ++($-2*(e2)$) -- ++($2*(e4)$) -- ++($2*(e2)$) -- cycle;
\filldraw[fill=black!11, draw = black] ($(O)+2*0*(e1)+2*0*(e2)+2*0*(e3)+2*0*(e4)$) -- ++($2*(e2)$) -- ++($-2*(e3)$) -- ++($-2*(e2)$) -- cycle;
\filldraw[fill=black!15, draw = black] ($(O)+2*0*(e1)+2*0*(e2)+2*0*(e3)+2*0*(e4)$) -- ++($2*(e2)$) -- ++($-2*(e4)$) -- ++($-2*(e2)$) -- cycle;
\filldraw[fill=black!45, draw = black] ($(O)+2*0*(e1)+2*0*(e2)+2*1*(e3)+2*0*(e4)$) -- ++($-2*(e2)$) -- ++($2*(e3)$) -- ++($2*(e2)$) -- cycle;
\filldraw[fill=black!27, draw = black] ($(O)+2*0*(e1)+2*0*(e2)+2*0*(e3)+2*1*(e4)$) -- ++($-2*(e2)$) -- ++($2*(e4)$) -- ++($2*(e2)$) -- cycle;
\filldraw[fill=black!27, draw = black] ($(O)+2*0*(e1)+2*0*(e2)+2*-1*(e3)+2*0*(e4)$) -- ++($2*(e2)$) -- ++($-2*(e3)$) -- ++($-2*(e2)$) -- cycle;
\filldraw[fill=black!45, draw = black] ($(O)+2*0*(e1)+2*0*(e2)+2*0*(e3)+2*-1*(e4)$) -- ++($2*(e2)$) -- ++($-2*(e4)$) -- ++($-2*(e2)$) -- cycle;
\filldraw[fill=black!30, draw = black] ($(O)+2*0*(e1)+2*0*(e2)+2*2*(e3)+2*0*(e4)$) -- ++($-2*(e1)$) -- ++($-2*(e2)$) -- ++($2*(e1)$) -- cycle;
\filldraw[fill=black!30, draw = black] ($(O)+2*0*(e1)+2*0*(e2)+2*0*(e3)+2*2*(e4)$) -- ++($2*(e1)$) -- ++($-2*(e2)$) -- ++($-2*(e1)$) -- cycle;
\filldraw[fill=black!30, draw = black] ($(O)+2*0*(e1)+2*0*(e2)+2*-2*(e3)+2*0*(e4)$) -- ++($2*(e1)$) -- ++($2*(e2)$) -- ++($-2*(e1)$) -- cycle;
\filldraw[fill=black!30, draw = black] ($(O)+2*0*(e1)+2*0*(e2)+2*0*(e3)+2*-2*(e4)$) -- ++($-2*(e1)$) -- ++($2*(e2)$) -- ++($2*(e1)$) -- cycle;
\end{tikzpicture}
\caption{Young diagrams for Gorenstein local algebras of dimension $\le 6$.}
\label{YDtablefig}
\end{figure}
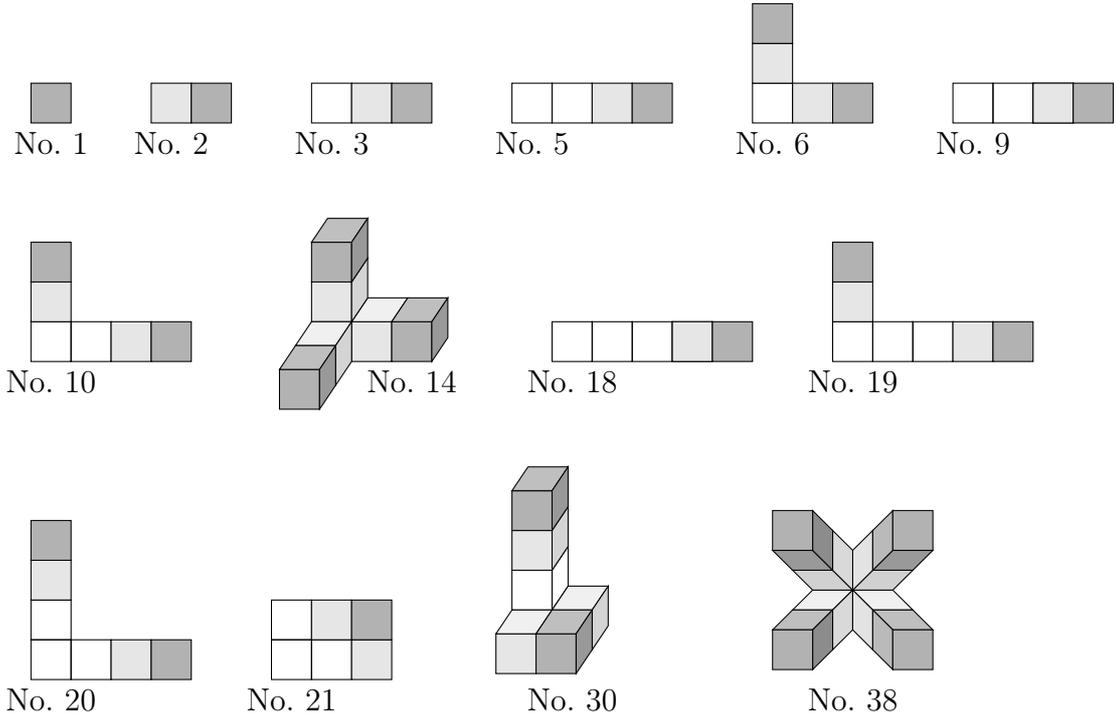
\end{example}

\begin{proposition}
\label{propYD}
Consider an $H$-pair $(A_{\Lambda,B}, U_{\Lambda,B})$ and the corresponding projective hypersurface $X = \{f(z) = 0\} \subseteq \PP^n = \PP(A_{\Lambda,B})$ of degree $d$ admitting an additive action. Then
\begin{equation}
\label{eqpropYD_d}
d = \max_{\substack{\mu \in \corn(\Lambda)\\b_\mu \ne 0}} |\lambda|,
\end{equation}
and polynomials $f_d$, $f_{d-1}$ in decomposition~\eqref{eqsumf} are as follows:
\begin{equation}
\label{eqpropYD}
\begin{aligned}
f_d &= \frac{(-1)^{d-1}}{d}\sum_{\substack{\mu \in \corn(\Lambda)\\ |\mu| = d}} c_\mu b_\mu z^\mu,\\
f_{d-1} &= \frac{(-1)^{d-2}}{d-1}\sum_{\substack{\mu \in \corn(\Lambda)\\ |\mu| = d-1}} c_\mu b_\mu z^\mu +
        (-1)^{d-2}\sum_{\substack{\mu \in \corn(\Lambda)\\ |\mu| = d}}\sum_{\substack{|\nu|=2\\ \nu \lecur \mu}} c_{\mu-\nu}b_\mu z_\nu z^{\mu-\nu},
\end{aligned}
\end{equation}
where $c_\mu = \frac{(\mu_1 + \ldots + \mu_k)!}{\mu_1! \ldots \mu_k!}$ denotes the multinomial coefficient.
\end{proposition}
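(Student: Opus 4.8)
\emph{Proof plan.}
The plan is to expand the defining equation \eqref{hyp_eq} and read off $f_d$ and $f_{d-1}$ term by term. By Construction~\ref{constrYD} the maximal ideal of $A_{\Lambda,B}$ splits as $\mm=U_{\Lambda,B}\oplus\langle x^{\corn}\rangle$, so, identifying $\mm/U_{\Lambda,B}$ with $\KK$ via $x^{\corn}\mapsto1$, the projection $\pi$ is just the $z_{\corn}$-coordinate. Since $A_{\Lambda,B}$ is monomial, $\mm^j$ is spanned by the images of the monomials $x^\lambda$ with $\lambda\in\Lambda$, $|\lambda|\ge j$, and such an image has nonzero $x^{\corn}$-component exactly when $\lambda\in\corn(\Lambda)$ and $b_\lambda\ne0$. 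By \cite[Theorem~5.1]{AS}, $d$ is the largest $j$ with $\mm^j\nsubseteq U_{\Lambda,B}$, which is precisely the largest $|\mu|$ over $\mu\in\corn(\Lambda)$ with $b_\mu\ne0$; this is \eqref{eqpropYD_d}. The same observation gives $\pi(\mm^{d+1})=0$, so expanding $\ln(1+z/z_0)=\sum_{k\ge1}\frac{(-1)^{k-1}}{k}(z/z_0)^k$ and applying $\pi$ kills all terms with $k>d$; comparing with \eqref{eqsumf} yields $f_k=\frac{(-1)^{k-1}}{k}\pi(z^k)$ for $1\le k\le d$. Everything thus reduces to computing $\pi(z^d)$ and $\pi(z^{d-1})$.

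I may assume $\Lambda$ is non-exceptional (for an exceptional diagram the algebra is unchanged by deleting the offending $e_i$); then every $e_i$ lies in $\Lambda\setminus(\corn(\Lambda)\cup\{0\})$ and no corner cell has degree $\le1$, so $d\ge2$. Write $z=w+z_{\corn}x^{\corn}$ with $w=\sum_{\lambda\in\Lambda\setminus(\corn(\Lambda)\cup\{0\})}z_\lambda x^\lambda\in U_{\Lambda,B}$. Because $x^{\corn}\in\mm^d$, every summand of $z^k=(w+z_{\corn}x^{\corn})^k$ that contains the factor $x^{\corn}$ lies in $\mm^{d+1}\subseteq U_{\Lambda,B}$ as soon as $k=d$, and also as soon as $k=d-1$ provided $d\ge3$; hence $\pi(z^d)=\pi(w^d)$ and, for $d\ge3$, $\pi(z^{d-1})=\pi(w^{d-1})$. (When $d=2$ one instead has $f_{d-1}=f_1=\pi(z)=z_{\corn}$, which agrees with \eqref{eqpropYD} once $z_\mu$ for a corner cell $\mu$ is read through the relation $x^\mu=b_\mu x^{\corn}$; this degenerate case is checked directly.)

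Now expand $w^k=\sum_{|m|=k}\frac{k!}{\prod_\lambda m_\lambda!}\bigl(\prod_\lambda z_\lambda^{m_\lambda}\bigr)x^{\sigma(m)}$, where $m$ runs over exponent vectors supported on $\Lambda\setminus(\corn(\Lambda)\cup\{0\})$ and $\sigma(m)=\sum_\lambda m_\lambda\lambda$, and apply $\pi$: a summand survives only if $\sigma(m)=\mu\in\corn(\Lambda)$ with $b_\mu\ne0$, contributing $\frac{k!}{\prod_\lambda m_\lambda!}\,b_\mu\prod_\lambda z_\lambda^{m_\lambda}$. The decisive point is the degree count $|\mu|=\sum_\lambda m_\lambda|\lambda|\ge\sum_\lambda m_\lambda=k$ combined with $|\mu|\le d$ from \eqref{eqpropYD_d}. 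For $k=d$ this forces $|\mu|=d$ and $m$ supported on the $e_i$ with $m_{e_i}=\mu_i$; since $\frac{d!}{\prod_\lambda m_\lambda!}=\frac{d!}{\mu_1!\cdots\mu_k!}=c_\mu$ and $\prod_i z_{e_i}^{\mu_i}=z^\mu$, we get $\pi(z^d)=\sum_{\mu\in\corn(\Lambda),\,|\mu|=d}c_\mu b_\mu z^\mu$, hence the formula for $f_d$. For $k=d-1$ we have $|\mu|\in\{d-1,d\}$: if $|\mu|=d-1$ the same argument gives the contribution $c_\mu b_\mu z^\mu$; if $|\mu|=d$ then $\sum_\lambda m_\lambda(|\lambda|-1)=1$, forcing exactly one degree-two cell $\nu$ with $m_\nu=1$ and the other factors among the $e_i$ with multiplicities $(\mu-\nu)_i$, so (using $\nu\prec\mu$, hence $\nu\notin\corn(\Lambda)$, when $d\ge3$) $z_\nu z^{\mu-\nu}$ is a genuine coordinate monomial and $\frac{(d-1)!}{\prod_\lambda m_\lambda!}=\frac{(d-1)!}{1!\,\prod_i(\mu-\nu)_i!}=(d-1)c_{\mu-\nu}$. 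Summing and multiplying by $\frac{(-1)^{d-2}}{d-1}$ gives the stated formula for $f_{d-1}$.

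The main obstacle is the combinatorial bookkeeping just outlined: isolating which exponent vectors $m$ give a nonzero $x^{\corn}$-component of $x^{\sigma(m)}$ — only those concentrated on the $e_i$ (for $f_d$), resp. on the $e_i$ together with a single degree-two cell (for $f_{d-1}$) — and then matching the multinomial coefficients $k!/\prod_\lambda m_\lambda!$ with the quantities $c_\mu$, $c_{\mu-\nu}$. One must also verify that the cells $\nu$ and $e_i$ occurring really lie in $\Lambda\setminus(\corn(\Lambda)\cup\{0\})$, so that $z_\nu$ and $z_i$ are bona fide coordinates; this uses non-exceptionality of $\Lambda$ together with $|\nu|=2<d$, and it is here that the small-degree exceptions $d\le2$ must be handled by hand.
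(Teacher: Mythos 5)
Your proposal is correct and follows essentially the same route as the paper's proof: expand $z_0^d\ln(1+z/z_0)$, note $\pi$ kills $\mm^{d+1}$ so $f_s=\frac{(-1)^{s-1}}{s}\pi(z^s)$, identify the surviving monomial combinations landing on corner cells, and match the multinomial coefficients $c_\mu$ and $(d-1)c_{\mu-\nu}$. Your treatment is in fact slightly more careful than the paper's (splitting off the $z_{\corn}x^{\corn}$ term, checking that the relevant $e_i$ and $\nu$ are genuine non-corner cells, and flagging the degenerate case $d=2$), but it is the same argument.
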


\begin{remark}
According to Proposition~\ref{prop_reduct}, the non-degenerate hypersurface $Z$ corresponding to the reduced $H$-pair $(\widetilde{A_{\Lambda,B}}, \widetilde{U_{\Lambda,B}})$ associated with $(\Lambda, B)$ has the same equation and degree as~$X$.
\end{remark}

\begin{proof}
According to~\cite[Theorem~2.14]{AZa}, the equation of $X$ is
\[z_0^d\pi\left(\ln\Bigl(1+\frac{z}{z_0}\Bigr)\right) = 0,\]
where $z_0 + z \in \KK \oplus \mm = A_{\Lambda,B}$ and $\pi\colon \mm \to \mm/U_{\Lambda,B} \cong \KK$ is the projection. For $z \in \mm$, let $\pi(z)$ equal the coordinate $z_{\corn}$ in decomposition~\eqref{eqYDcoord}. Notice that
\[z_0^d\ln\Bigl(1+\frac{z}{z_0}\Bigr) = \sum_{s=1}^d \frac{(-1)^{s-1}}{s} z_0^{d-s} z^s.\]

For any $s \ge 2$, the expanding of $z^s$ gives the sum of $z_{\lambda^{(1)}}\ldots z_{\lambda^{(s)}} x^{\lambda^{(1)}+\ldots+\lambda^{(s)}}$ over all tuples $(\lambda^{(1)}, \ldots, \lambda^{(s)}) \in (\Lambda \setminus \{0\})^s$; see~\eqref{eqYDcoord}.
The application of~$\pi$ leaves only summands with $\lambda^{(1)}+\ldots+\lambda^{(s)} \in \corn(\Lambda)$. It follows that
\[f_s = \sum_{\mu \in \corn(\Lambda)} b_\mu \!\!\sum_{\lambda^{(1)}+\ldots+\lambda^{(s)} = \mu}\!\! z_{\lambda^{(1)}}\ldots z_{\lambda^{(s)}}.\]
This proves formula~\eqref{eqpropYD_d}. For the polynomial $f_d$ we have $|\mu| = d$ and $|\lambda^{(i)}| = 1$ for any $1 \le i \le d$, i.e., in every summand the coordinates $z_{\lambda^{(i)}}$ are $z_1, \ldots, z_k$ with the product $z^\mu = z_1^{\mu_1}\ldots z_k^{\mu_k}$. The number of appropriate tuples equals $c_\mu$, so we obtain the formula for $f_d$ in~\eqref{eqpropYD}.

For $s = d-1$, we have two cases. If $|\mu| = d-1$, then $|\lambda^{(i)}| = 1$ for all $1 \le i \le d-1$, and we obtain the first sum in the formula for $f_{d-1}$ in~\eqref{eqpropYD}. If $|\mu| = d$ then $|\lambda^{(i)}| = 1$ for all but one $1 \le i \le d-1$ and there is one $\lambda^{(i)} = \nu$ with $|\nu| = 2$. The number of appropriate tuples equals $(d-1)c_{\mu-\nu}$, which gives the second sum in the formula for $f_{d-1}$ in~\eqref{eqpropYD}.
\end{proof}

Let us give an application of Proposition~\ref{propYD}.

\begin{proposition}
\label{prYD}
Let $X$ be the projective hypersurface admitting an additive action associated with an $H$-pair $(A_{\Lambda, B}, U_{\Lambda, B})$. Suppose there exists $1 \le i \le k$ such that
\smallskip
\begin{itemize}
    \item $\mu_i \ge 3$ for any $\mu \in \corn(\Lambda)$ with $|\mu| = d$;
    \item $\mu_i \ge 1$ for any $\mu \in \corn(\Lambda)$ with $|\mu| = d - 1$.
\end{itemize}
\smallskip
Then $X$ is non-normal.
\end{proposition}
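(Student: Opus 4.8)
The plan is to combine the normality criterion of Proposition~\ref{propnorm} with the explicit formulas for $f_d$ and $f_{d-1}$ from Proposition~\ref{propYD}. Fix the index $i$ furnished by the hypotheses. I claim that the coordinate function $z_i$ divides both $\widetilde{f_d}$ and $f_{d-1}$; this shows at once that $\widetilde{f_d}$ and $f_{d-1}$ are not coprime, and hence, by Proposition~\ref{propnorm}, that $X$ is non-normal.

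First I would analyse $f_d$. By~\eqref{eqpropYD} we have, up to a non-zero scalar, $f_d = \sum_{\mu\in\corn(\Lambda),\,|\mu|=d} c_\mu b_\mu z^\mu$, and by Proposition~\ref{propYD} at least one corner cell $\mu$ with $|\mu|=d$ has $b_\mu\ne 0$, so $f_d\ne 0$ (in particular the first hypothesis forces $d\ge 3$). The first hypothesis gives $\mu_i\ge 3$ for every corner cell $\mu$ with $|\mu|=d$, hence $z_i^3\mid z^\mu$ for every monomial of $f_d$ and therefore $z_i^3\mid f_d$. Since $z_i$ is irreducible and the polynomial ring is a UFD, the multiplicity of $z_i$ in the factorisation $f_d=p_1^{a_1}\cdots p_s^{a_s}$ is at least $3$; consequently $z_i$ still divides $\widetilde{f_d}=p_1^{a_1-1}\cdots p_s^{a_s-1}$ (indeed $z_i^2\mid\widetilde{f_d}$).

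Next I would check divisibility of $f_{d-1}$ monomial by monomial, again via~\eqref{eqpropYD}. In the first sum each monomial is $z^\mu$ with $|\mu|=d-1$ and $\mu_i\ge 1$ by the second hypothesis, so $z_i\mid z^\mu$. In the second sum each term is proportional to $z_\nu z^{\mu-\nu}$ with $|\mu|=d$, $|\nu|=2$ and $\nu\lecur\mu$; since $\nu_i\le|\nu|=2$ and $\mu_i\ge 3$ we get $(\mu-\nu)_i=\mu_i-\nu_i\ge 1$, so $z_i\mid z^{\mu-\nu}$ and a fortiori $z_i\mid z_\nu z^{\mu-\nu}$. Hence $z_i$ divides every monomial of $f_{d-1}$, so $z_i\mid f_{d-1}$.

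Combining the two steps, $z_i$ is a common irreducible factor of $\widetilde{f_d}$ and $f_{d-1}$, so they are not coprime and $X$ is non-normal by Proposition~\ref{propnorm}. I do not expect a genuine obstacle: granted Propositions~\ref{propnorm} and~\ref{propYD}, this is short divisibility bookkeeping. The one point deserving care is the asymmetry between the thresholds $3$ (in degree $d$) and $1$ (in degree $d-1$): the value $3$ is dictated by the second sum of $f_{d-1}$, where the exponent of $z_i$ drops by $|\nu|=2$ and must stay positive, and this same bound automatically yields $z_i^2\mid f_d$, which is precisely what is needed so that passing from $f_d$ to $\widetilde{f_d}$ does not destroy divisibility by $z_i$.
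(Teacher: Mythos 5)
Your proof is correct and follows essentially the same route as the paper: the paper likewise deduces from Proposition~\ref{propYD} that $z_i^3$ divides $f_d$ and $z_i$ divides $f_{d-1}$, concludes that $\widetilde{f_d}$ and $f_{d-1}$ share the factor $z_i$, and invokes Proposition~\ref{propnorm}. Your write-up merely spells out the monomial-by-monomial divisibility bookkeeping that the paper leaves implicit.
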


\begin{remark}
According to Lemma~\ref{l1}, the non-degenerate hypersurface $Z$ corresponding to the reduced $H$-pair $(\widetilde{A_{\Lambda,B}}, \widetilde{U_{\Lambda,B}})$ associated with $(\Lambda, B)$ is normal if and only if $X$ is normal.
\end{remark}

\begin{proof}
Under these conditions, $f_d$ is divisible by $z_i^3$, and $f_{d-1}$ is divisible by~$z_i$ according to Proposition~\ref{propYD}. So $\widetilde f_d$ and $f_{d-1}$ are not coprime, and $X$ is non-normal by Proposition~\ref{propnorm}.
\end{proof}

Now we come to a series of examples illustrating the above constructions and results.

\begin{example}
\label{exYDrays}
Let $\Lambda$ consist of ``rays'' of lengths $d_1 \ge \ldots \ge d_k \ge 2$, i.e., 
the diagram~$\Lambda$ is given by the set of corner cells $d_ie_i$, $1 \le i \le k$; see Figure~\ref{YDraysfig}.
Consider any set of non-zero constants $B = \{b_1, \ldots, b_k\}$, the associated $H$-pair $(A_{\Lambda,B}, U_{\Lambda,B})$, and the projective hypersurface $X$ admitting an additive action. Denote $d = d_1 = \max d_i$. Notice that $(A_{\Lambda,B}, U_{\Lambda,B}) = (\widetilde{A_{\Lambda,B}}, \widetilde{U_{\Lambda,B}})$ since the system of linear equations~\eqref{eqYDprecorn2} is $b_iz_{(d_i - 1)e_i} = 0$, $1 \le i \le k$.
\begin{figure}[h]
\begin{tikzpicture}[x=0.75pt,y=0.75pt,yscale=-1,xscale=1]
\def\d{10}
\coordinate (e1) at (\d,0);
\coordinate (e2) at (0,-\d);
\coordinate (e3) at (-\d*0.5,\d*0.6);
\coordinate (O) at (0,0);
\draw ($(O)+(-\d,\d)-2*(e1)$) -- ++($2*6*(e1)$) -- ++($-2*(e2)$) -- ++($-2*5*(e1)$) -- ++($2*5*(e2)$) -- ++($-2*(e1)$) -- cycle;
\draw ($(O)+(-\d,\d)-2*(e2)$) -- ++($2*5*(e2)$) -- ++($-2*(e3)$) -- ++($-2*4*(e2)$) -- ++($2*4*(e3)$) -- ++($-2*(e2)$) -- cycle;
\draw ($(O)+(-\d,\d)-2*(e3)$) -- ++($2*4*(e3)$) -- ++($-2*(e1)$) -- ++($-2*3*(e3)$) -- ++($2*6*(e1)$) -- ++($-2*(e3)$) -- cycle;
\foreach \x/\y/\z in {3/-1/0,-1/2/0}
    \filldraw[precornfill] ($(O)+2*\x*(e1)+2*\y*(e2)+2*\z*(e3)$) + (-\d,-\d) rectangle ++(\d,\d);
\foreach \x/\y/\z in {3/0/0,-1/0/2}
    \filldraw[fill=black!6, draw = black] ($(O)+(-\d,\d)+2*\x*(e1)+2*\y*(e2)+2*\z*(e3)$) -- ++($2*(e1)$) -- ++($-2*(e3)$) -- ++($-2*(e1)$) -- cycle;
\foreach \x/\y/\z in {0/2/0,0/-1/2}
    \filldraw[fill=black!17, draw = black] ($(O)+(-\d,\d)+2*\x*(e1)+2*\y*(e2)+2*\z*(e3)$) -- ++($2*(e2)$) -- ++($-2*(e3)$) -- ++($-2*(e2)$) -- cycle;
\foreach \x/\y/\z in {4/-1/0,-1/3/0,-1/-1/3}
    \filldraw[cornfill] ($(O)+2*\x*(e1)+2*\y*(e2)+2*\z*(e3)$) + (-\d,-\d) rectangle ++(\d,\d);
\foreach \x/\y/\z in {4/0/0,-1/0/3,-1/4/0}
    \filldraw[fill=black!25, draw = black] ($(O)+(-\d,\d)+2*\x*(e1)+2*\y*(e2)+2*\z*(e3)$) -- ++($2*(e1)$) -- ++($-2*(e3)$) -- ++($-2*(e1)$) -- cycle;
\foreach \x/\y/\z in {0/3/0,0/-1/3,5/-1/0}
    \filldraw[fill=black!37, draw = black] ($(O)+(-\d,\d)+2*\x*(e1)+2*\y*(e2)+2*\z*(e3)$) -- ++($2*(e2)$) -- ++($-2*(e3)$) -- ++($-2*(e2)$) -- cycle;
\end{tikzpicture}
\caption{A Young diagram with ``rays'' in Example~\ref{exYDsegm}.}
\label{YDraysfig}
\end{figure}
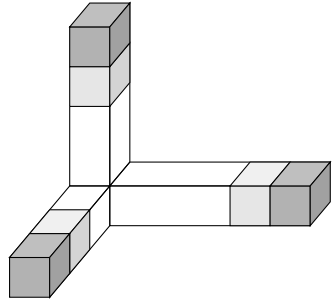

According to Proposition~\ref{propYD}, we have
\[f_d = \frac{(-1)^{d-1}}{d} \sum_{i: \, d_i = d} b_i z_i^d.\]
If $f_d$ is square-free, then $X$ is normal by Corollary~\ref{cornorm}. Otherwise, $f_d$ and $\frac{\pa f_d}{\pa z_1} = (-1)^{d-1}b_1z_1^{d-1}$ have a common divisor~$z_1$. It follows that if $X$ is non-normal, then $d_i = d$ only for $i = 1$, $d \ge 2$, $f_d = \frac{(-1)^{d-1}}{d}b_1z_1^d$, and $\widetilde f_d = \frac{(-1)^{d-1}}{d}b_1z_1^{d-1}$.

Let us check whether $\widetilde f_d$ and $f_{d-1}$ are coprime, i.e., $f_{d-1}$ is divisible by $z_1$. According to Proposition~\ref{propYD}, we have
\[f_{d-1} = \frac{(-1)^{d-2}}{d-1}\sum_{i:\,d_i = d-1} b_iz_i^{d-1} \;+\; (-1)^{d-2}b_1z_{2e_1}z_1^{d-2}.\]
It is divisible by $z_1$ if and only if there are no $d_i = d-1$ and $d \ge 3$.

\smallskip

We conclude that $X$ is normal if and only if $d_1 = 2$ or $d_2 \ge d_1-1$.
\end{example}

\begin{example}
\label{exYDsegm}
As a particular case of the previous example, we see that for a ``segment'' Young diagram of length $d+1$, i.e., for the Gorenstein local algebra $\KK[x]/(x^{d+1})$ with the subspace $U = \langle x, x^2, \ldots, x^{d-1}\rangle$, the corresponding hypersurface $X$ is non-normal for $d \ge 3$; see Figure~\ref{YDsegmfig}.
\begin{figure}[h]
\begin{tikzpicture}[x=0.75pt,y=0.75pt,yscale=-1,xscale=1]
\def\d{15}
\coordinate (e1) at (\d,0);
\coordinate (e2) at (0,-\d);
\foreach \x in {0, 1, 2, 4}
    \draw ($2*\x*(e1)$) + (-\d,-\d) rectangle ++(\d,\d);
\draw ($2*0*(e1)$) node{\scriptsize $1^{\phantom{1}}\!\!$};
\draw ($2*1*(e1)$) node{\scriptsize $x^{\phantom{1}}\!\!$};
\draw ($2*2*(e1)$) node{\scriptsize $x^2$};
\draw ($2*3*(e1)$) node{\scriptsize $\ldots$};
\draw ($2*4*(e1)$) node{\scriptsize $x^{d-2}$};
\filldraw[precornfill] ($2*5*(e1)$) + (-\d,-\d) rectangle ++(\d,\d);
\draw ($2*5*(e1)$) node{\scriptsize $x^{d-1}$};
\filldraw[cornfill] ($2*6*(e1)$) + (-\d,-\d) rectangle ++(\d,\d);
\draw ($2*6*(e1)$) node{\scriptsize $x^d$};
\end{tikzpicture}
\caption{A ``segment'' Young diagram in Example~\ref{exYDsegm}.}
\label{YDsegmfig}
\end{figure}
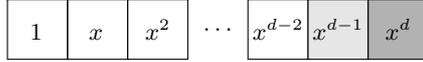
\end{example}

\begin{example}
The Young diagram on Figure~\ref{YD1fig} corresponds to a non-normal hypersurface~$X$: for corner cells $|(3,1)| = 4$, $|(1,2)| = 3$, so for $i=1$ the conditions of Proposition~\ref{prYD} hold.
\end{example}

\begin{example}
Consider the Young diagram $\Lambda$ that is the parallelepiped of size
\[(1+d_1) \times \ldots \times (1+d_k), \;\; d_i \ge 1,\]
and the corresponding hypersurface~$X$. The diagram~$\Lambda$ has a unique $\mu = (d_1, \ldots, d_k) \in \corn(\Lambda)$. Notice that $(A_{\Lambda,B}, U_{\Lambda,B}) = (\widetilde{A_{\Lambda,B}}, \widetilde{U_{\Lambda,B}})$ since the system of linear equations~\eqref{eqYDprecorn2} is $z_{\mu - e_i} = 0$, $1 \le i \le k$. If there is $d_i \ge 3$, then $X$ is non-normal by Proposition~\ref{prYD}. Otherwise, $X$ is normal since any irreducible divisor of $f_d = \frac{(-1)^{d-1}}{d} z^\mu$ is some variable~$z_i$, $1 \le i \le k$, and at least one monomial in the polynomial
\[f_{d-1} = (-1)^{d-2}\sum_{\substack{|\nu|=2\\ \nu \lecur \mu}} c_{\mu-\nu}b_\mu z_\nu z^{\mu-\nu}\]
is not divisible by $z_i$: for $\nu = 2e_i$ if $d_i = 2$ or for
$\nu = e_i + e_j$ if $d_i = 1$. The exceptional case for these arguments is the one-dimensional parallelepiped with $d_1 = 1$, which corresponds to a point in $\PP^1$. We conclude that $X$ is normal if and only if all $d_i$-s are not greater than~$2$.
\end{example}

\section{Hypersurfaces of maximal degree}
\label{hmd}

It is known that if a hypersurface $X\subseteq\PP^n$ admits an additive action, then its degree is at most $n$; see~\cite[Corollary~5.2]{AS}. Moreover, it follows from~\cite[Theorem~5.1]{AS} that such a hypersurface comes from an $H$-pair $(A,U)$, where $A=\KK[x]/(x^{n+1})$ and $U$ is a hyperplane in the maximal ideal $\mm=(x)$ that does not contain $x^n$.

\begin{proposition}
\label{pp}
For any $n\ge 2$, a hypersurface $X\subseteq\PP^n$ of degree $n$ that admits an additive action is unique up to automorphism of $\PP^n$.
\end{proposition}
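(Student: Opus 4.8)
The plan is to parametrize all the relevant $H$-pairs and then check that they are all equivalent. By the remark preceding the proposition, any hypersurface $X \subseteq \PP^n$ of degree $n$ admitting an additive action comes from an $H$-pair $(A, U)$ with $A = \KK[x]/(x^{n+1})$ and $U \subseteq \mm = (x)$ a hyperplane not containing $x^n$. So the whole problem reduces to the following: classify, up to the equivalence of $H$-pairs from Definition~5, the hyperplanes $U \subseteq \mm$ with $x^n \notin U$. Since $\mm$ has basis $x, x^2, \ldots, x^n$ and $\dim U = n-1$, such a hyperplane is cut out by a single linear equation $\sum_{i=1}^n c_i z_i = 0$ with $c_n \ne 0$; equivalently, $U$ is the graph of a linear functional expressing the $x^n$-coordinate in terms of the others, i.e.
\[
U = \Bigl\{ a_1 x + a_2 x^2 + \ldots + a_{n-1} x^{n-1} + \bigl(\textstyle\sum_{i=1}^{n-1} \alpha_i a_i\bigr) x^n \;\Bigm|\; a_1, \ldots, a_{n-1} \in \KK \Bigr\}
\]
for some scalars $\alpha_1, \ldots, \alpha_{n-1} \in \KK$.

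Next I would exploit the automorphisms of $A = \KK[x]/(x^{n+1})$ to normalize these parameters. The key point is that $\Aut(A)$ is large: any substitution $x \mapsto t_1 x + t_2 x^2 + \ldots + t_n x^n$ with $t_1 \ne 0$ induces an algebra automorphism of $A$, and these are all of them. I want to show that the orbit of the subspace $\langle x \rangle \oplus \cdots$ — more precisely, the action of $\Aut(A)$ on the set of admissible hyperplanes $U$ — is transitive, i.e. every such $U$ can be carried to the "standard" one $U_0 = \langle x, x^2, \ldots, x^{n-1} \rangle$ (the case $\alpha_1 = \cdots = \alpha_{n-1} = 0$). The cleanest route: observe that $\mm^2 = \langle x^2, \ldots, x^n \rangle$ is characteristic, and $\mm^2 \cap U$ always has dimension $n-2$ (since $x^n \notin U$ forces the projection $U \to \mm/\mm^2$ to be surjective), so $U = \langle u \rangle \oplus (\mm^2 \cap U)$ where $u = x + (\text{higher order})$ is any lift of a generator of $\mm/\mm^2$. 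Applying the automorphism $x \mapsto u$ (legitimate since the $x$-coefficient of $u$ is $1 \ne 0$), we may assume $x \in U$. Then repeat one degree up: $\mm^3$ is characteristic, $U \cap \mm^3$ has the expected dimension, and one can find an automorphism fixing the already-normalized part and killing the $x^2$-component that needs adjusting. Iterating this degree by degree — at each stage using an automorphism of the form $x \mapsto x + t_j x^j$ to clear the $j$-th unwanted coefficient while preserving what was fixed below — brings $U$ to $U_0$.

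The main obstacle I anticipate is making this iterative normalization precise: I need to verify that at the $j$-th step the automorphism $x \mapsto x + t_j x^{j}$ (or a suitable finite composition) acts on the relevant coefficient of $U$ by an affine-linear map with nonzero leading term, so that a unique $t_j$ clears it, and — crucially — that it does not disturb the normalizations achieved at steps $1, \ldots, j-1$. This is a triangularity statement: the automorphism $x \mapsto x + t_j x^j$ changes $x^\ell \mapsto x^\ell + \ell t_j x^{\ell + j - 1} + O(x^{\ell + 2j - 2})$, so it only affects coordinates in degrees $\ge j+1$ while leaving degrees $\le j$ (and in particular the span structure of $U$ in low degrees) untouched. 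Once transitivity of $\Aut(A)$ on admissible hyperplanes is established, all the $H$-pairs $(A, U)$ are equivalent, hence by Theorem~\ref{prth} (and since degree-$n$ hypersurfaces are automatically not hyperplanes for $n \ge 2$) all the corresponding hypersurfaces $X$ are related by an automorphism of $\PP^n$, which is the claim. As a by-product one gets the explicit equation of $X$ from $(A, U_0)$ via formula~\eqref{hyp_eq}, which the next sentence of the paper promises to record.
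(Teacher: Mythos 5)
Your reduction of the problem to transitivity of $\Aut(A)$, $A=\KK[x]/(x^{n+1})$, on the set of hyperplanes $U\subseteq\mm$ with $x^n\notin U$ is exactly the paper's starting point, and an explicit normalization by substitutions is a legitimate (and more elementary) way to finish. But the key step as you state it is false: the claim that $x\mapsto x+t_jx^j$ ``leaves degrees $\le j$, and in particular the span structure of $U$ in low degrees, untouched'' misses where the data of $U$ actually sits. Writing $U=\ker\bigl(z_n-\sum_{i=1}^{n-1}\alpha_iz_i\bigr)$, the conditions $x,x^2,\ldots$ achieved at earlier steps are conditions on the \emph{top} ($x^n$-) coefficient, which is precisely what a substitution modifies. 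Concretely, take $n=4$, $A=\KK[x]/(x^5)$, and $U=\langle x,\,x^2,\,x^3+ax^4\rangle$ with $a\ne 0$ (so your first two steps are already done). The only one-parameter substitutions whose effect can bring $x^3$ into the subspace are $\phi\colon x\mapsto x+tx^2$ with $3t+a=0$; but then $\phi(x^2)=x^2+2tx^3+t^2x^4$ forces $x^2+t^2x^4\in\phi(U)$ while $x^4\notin\phi(U)$, so $x^2\notin\phi(U)$ — the previously secured condition is destroyed (the same happens if you act by $\phi^{-1}$: the transformed functional is $z_4+(3t-a)z_3+(t^2-2at)z_2$, and $t=a/3$ leaves a nonzero $z_2$-coefficient). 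So your bottom-up induction does not close; the apparent success in low dimension ($n\le 3$) is an accidental cancellation.

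The strategy is salvageable by reversing the order. A substitution $x\mapsto x+tx^j$ changes the functional $z_n-\sum_i\alpha_iz_i$ only in the coefficients $\alpha_m$ with $m\le n-j+1$, and it shifts $\alpha_{n-j+1}$ by $(n-j+1)t$ (nonzero in characteristic zero) modulo terms involving the other $\alpha$'s; so running $j=2,3,\ldots,n$ and clearing $\alpha_{n-1},\alpha_{n-2},\ldots,\alpha_1$ in that order is genuinely triangular and brings $U$ to $U_0=\langle x,\ldots,x^{n-1}\rangle$. Note that this corrected argument differs from the paper's, which avoids all such bookkeeping: there one observes that the unipotent group $G=\{x\mapsto x+a_2x^2+\cdots+a_nx^n\}$ of dimension $n-1$ acts on the affine parameter space $C\cong\AA^{n-1}$ of admissible hyperplanes, shows by a leading-coefficient computation that the stabilizer of the point corresponding to $U_0$ is trivial, and then concludes transitivity from the fact that orbits of unipotent groups on affine varieties are closed. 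Your approach, once the ordering is fixed, trades that general theorem for an explicit induction; as written, however, the inductive step fails.
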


\begin{proof}
In the notation introduced above, it suffices to prove that the subspace $U$ can be sent to the subspace $U_0:=\langle x,x^2,\ldots,x^{n-1}\rangle$ by an automorphism of the algebra $A$.
The set $C$ of all possible subspaces $U$ can be interpreted as an open subset in $\PP(\mm^*)$ isomorphic to the affine space $\AA^{n-1}$. Automorphisms of the algebra $A$ of the form
$$
x\mapsto x+a_2x^2+\ldots+a_nx^n, \quad a_2,\ldots,a_n\in\KK
$$
form an $(n-1)$-dimensional unipotent linear algebraic group $G$. This group acts naturally on $\PP(\mm^*)$ and preserves the subset $C$. We can write any $z\in\mm$ as
$z=z_1x+z_2x^2+\ldots+z_nx^n$ and consider $z_1,\ldots,z_n$ as a basis of $\mm^*$.

\begin{lemma}
\label{ll}
The stabilizer of the form $z_n$ in the group $G$ is trivial.
\end{lemma}

\begin{proof}
Assume that an automorphism $x\mapsto x+a_2x^2+\ldots+a_nx^n$ fixes $z_n$ and $a_k$ is the first non-zero coefficient. Applying this automorphism to
$z_1x+z_2x^2+\ldots+z_nx^n$, we see that $z_n$ goes to $z_n+(n-k+1)a_kz_{n-k+1}+\sum_{j=1}^{n-k}b_jz_j$ with some $b_j\in\KK$, a contradiction.
\end{proof}

The line $\KK z_n$ is a point in $C$. By Lemma~\ref{ll}, the $G$-orbit of this point is $(n-1)$-dimensional. By~\cite[Section~1.3]{PV}, any orbit of a unipotent group acting on an affine variety is closed. We conclude that the group $G$ acts on $C$ transitively, and the subspace $U$ can be sent to the subspace $U_0$ by an appropriate automorphism from $G$.
\end{proof}

Note that the hypersurface from Proposition~\ref{pp} is the hypersurface from Example~\ref{exYDsegm}. Let us take a closer look at this remarkable hypersurface. We already know that it is normal if and only if $n\le 2$. Let us write down explicitly the equation of $X$. By~\cite[Theorem~2.14]{AZa}, it has the form
$$
z_0^n\pi\left(\ln\Bigl(1+\frac{z}{z_0}\Bigr)\right)=\pi\left(\sum_{k=1}^n \frac{(-1)^{k-1}}{k}z_0^{n-k} (z_1x+\ldots+z_nx^n)^k\right)=0,
$$
where $\pi\colon \mm\to\mm/U_0$ is the projection. This gives the equation
$$
\sum_{k=1}^n \frac{(-1)^{k-1}}{k}z_0^{n-k}\sum_{j_1+\ldots+j_k=n} c_jz_{j_1}\ldots z_{j_k}=0
$$
For example, with $n=5$ we have
$$
z_0^4z_5-z_0^3z_1z_4-z_0^3z_2z_3+z_0^2z_1^2z_3+z_0^2z_1z_2^2-z_0z_1^3z_2+\frac{1}{5}z_1^5=0.
$$
The boundary $X_0$ is given in $X$ by $z_0=0$, so it is the subspace $z_0=z_1=0$ in $\PP^n$. The singular locus $\Xsing$
is empty if $n=2$ and coincides with $X_0$ if $n\ge 3$.


\section*{Statements and Declarations}

This work was supported by the Russian Science Foundation grant 23-21-00472.

\section*{Conflict of Interest}

The authors have no relevant financial or non-financial interests to disclose.

\section*{Data Availability}

Data sharing not applicable to this article as no datasets were generated or analysed during the current study.

\end{document}